\documentclass[11pt]{article}
\setlength{\parskip}{0em}

\usepackage{amsthm}
\usepackage{fullpage}
\usepackage{amssymb, amsmath}
\usepackage{hyperref}
\usepackage{graphicx}
\usepackage{titlesec}
\usepackage{caption}
\usepackage{enumitem}
\usepackage{thmtools}
\usepackage{thm-restate}
\newtheorem{thm}{Theorem}
\newtheorem{lem}[thm]{Lemma}

\newtheorem{cor}[thm]{Corollary}
\newtheorem{conj}[thm]{Conjecture}
\newtheorem{obs}[thm]{Observation}
\newtheorem{ques}[thm]{Question}
\newtheorem{claim}[thm]{Claim}

\title{On an $f$-coloring generalization of linear arboricity of multigraphs}
\author{Ronen Wdowinski\thanks{Department of Combinatorics and Optimization, University of Waterloo, Waterloo, ON, Canada. Email: ronen.wdowinski@uwaterloo.ca.}}
\date{}

\begin{document}
\maketitle
\begin{abstract}
Given a multigraph $G$ and function $f : V(G) \rightarrow \mathbb{Z}_{\ge 2}$ on its vertices, a degree-$f$ subgraph of $G$ is a spanning subgraph in which every vertex $v$ has degree at most $f(v)$. The degree-$f$ arboricity $a_f(G)$ of $G$ is the minimum number of colors required to edge-color $G$ into degree-$f$ forests. At least for constant $f$, Truszczy\'nski conjectured that $a_f(G) \le \max \{\Delta_f(G) + 1, a(G)\}$ for every multigraph $G$, where $\Delta_f(G) = \max_{v \in V(G)} \lceil d(v)/f(v) \rceil$ and $a(G)$ is the usual arboricity of $G$. This is a strong generalization of the Linear Arboricity Conjecture due to Akiyama, Exoo, and Harary. In this paper, we disprove Truszczy\'nski's conjecture in a strong sense for general multigraphs. On the other hand, extending known results for linear arboricity, we prove that the conjecture holds for simple graphs with sufficiently large girth, and that it holds for all simple graphs asymptotically. More strongly, we prove these partial results in the setting of directed graphs, where the color classes are required to be analogously defined degree-$f$ branchings.
\end{abstract}

\begin{center}
Keywords: linear arboricity, pseudoarboricity, fractional arboricity, f-colorings, branchings
\end{center}

\section{Introduction}

In this writing, a \textit{multigraph} may have parallel edges but no loops, unless otherwise stated. Given a multigraph $G$ and a function $f : V(G) \rightarrow \mathbb{Z}_{\ge 1}$ on its vertex set, a \textit{degree-$f$ subgraph} (or an \textit{$f$-matching}) is a spanning subgraph $H$ of $G$ such that every vertex $v \in V(G)$ has degree $d_H(v) \le f(v)$ in $H$. If every vertex $v$ has degree exactly $f(v)$, such a subgraph is commonly known as an \textit{$f$-factor}. An \textit{$f$-coloring} of $G$ is an assignment of a color to every edge of $G$ so that each color class is a degree-$f$ subgraph. The \textit{$f$-chromatic index} $\chi_f'(G)$ of $G$ is the minimum number of colors required in an $f$-coloring of $G$. For a vertex subset $S \subseteq V(G)$, we let $e(S)$ denote the number of edges in $G$ with both endpoints in $S$, and we write $f(S) = \sum_{v \in S} f(v)$. 

Hakimi and Kariv \cite{HaKa} introduced the notion of an $f$-coloring as a generalization of the case $f = 1$ of a proper edge-coloring, where $\chi_1'(G)$ is the usual chromatic index of $G$. This paper will study the similar problem of edge-coloring a multigraph into degree-$f$ forests rather than degree-$f$ subgraphs. This problem generalizes the most-studied cases $f = \infty$ (arboricity) and $f = 2$ (linear arboricity) to more general vertex weight functions.

The \textit{arboricity} $a(G)$ of a multigraph $G$ is the minimum number of colors required to edge-color $G$ so that every color class is a forest. A celebrated theorem of Nash-Williams \cite{Na} states that the arboricity of a multigraph $G$ is given by
\begin{align*}
	a(G) = \max_{S \subseteq V(G), |S| \ge 2} \left\lceil \frac{e(S)}{|S|-1} \right\rceil.
\end{align*}
Given a function $f : V(G) \rightarrow \mathbb{Z}_{\ge 2}$, the \textit{degree-$f$ arboricity} $a_f(G)$ of multigraph $G$ is the minimum number of colors required to edge-color $G$ so that every color class is a degree-$f$ forest. A degree-$2$ forest is more commonly known as a \textit{linear forest}, and $a_2(G) = la(G)$ is called the \textit{linear arboricity} of $G$. Unlike arboricity, determining the linear arboricity $la(G)$ of a general multigraph $G$ is NP-hard \cite{Pe}. However, a conjecture known as the Linear Arboricity Conjecture, due to Akiyama, Exoo, and Harary \cite{AkExHa}, asserts that we can always determine the linear arboricity of simple graph to within an additive error of one. Observe that $la(G) \ge \lceil \Delta(G)/2 \rceil$ for every multigraph $G$ because we require at least $\lceil d(v)/2 \rceil$ linear forests to cover the edges incident to vertex $v$. The following was conjectured in \cite{AkExHa}.

\begin{conj}[Linear Arboricity Conjecture] \label{linear-arboricity}
For every simple graph $G$, we have  $la(G) \le \left\lceil (\Delta(G) + 1)/2 \right\rceil$.
\end{conj}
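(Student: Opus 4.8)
Conjecture~\ref{linear-arboricity} is a well-known open problem, so what I can realistically propose is the line of attack behind the strongest partial results, together with an honest account of where it stalls. The first reduction is to \emph{regular} graphs: any simple graph $G$ with $\Delta(G)=\Delta$ is a spanning subgraph of some $\Delta$-regular simple graph $H$ (add vertices and edges), and since a subgraph of a linear forest is a linear forest, $la(G)\le la(H)$; so it suffices to prove that every $d$-regular simple graph decomposes into $\lceil (d+1)/2\rceil$ linear forests. When $d=2k$ this asks for $k+1$ forests, and the $+1$ is forced, since a $2k$-regular $n$-vertex graph has $kn$ edges while $k$ linear forests contain at most $k(n-1)$ of them. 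Hence the crux is the odd case: partition the edges of a $(2k+1)$-regular simple graph into exactly $k+1$ linear forests, each of which is therefore forced to be nearly spanning.

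The natural starting point is Vizing's theorem: a $(2k+1)$-regular simple graph has a proper edge-colouring with $2k+2$ colours, and pairing up the colour classes produces $k+1$ subgraphs of maximum degree $2$ --- but these are disjoint unions of paths \emph{and even cycles}, not linear forests, so everything hinges on breaking the cycles while keeping the colour classes balanced. The cleanest way I know to ``redistribute'' the broken edges is probabilistic: take a random near-decomposition, delete one edge from each monochromatic cycle, and recolour the deleted edges so that, with positive probability by the Lov\'asz Local Lemma, no vertex ends up with three edges of a single colour and no monochromatic cycle survives; since the bad events are essentially local to a vertex or a short cycle, this yields $la(G)\le d/2+O(d^{2/3}\,\mathrm{polylog}\,d)$ for all $d$-regular graphs, and the exact bound $\lceil (d+1)/2\rceil$ once the girth is large compared to $d$, because high girth removes the short cycles that are the only local obstruction. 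For small fixed $\Delta$, or for planar and other sparse classes, I would instead take a minimum counterexample $G$, show it contains no \emph{reducible configuration} (a small subgraph that can be deleted, with the rest coloured by induction and the colouring then extended), and derive a contradiction from Euler's formula or a sparsity bound; the delicate part of the extension step is bookkeeping which endpoints of the existing linear forests are already interior vertices of their colour class, so that re-inserting an edge never creates a vertex of colour-degree $3$ nor closes a path into a cycle.

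The main obstacle --- the reason the conjecture remains open for general $\Delta$ --- is exactly this endpoint bookkeeping: acyclicity of a colour class is not a local condition, so the choices made along a path are coupled over an unbounded distance and a single recolouring can cascade arbitrarily far. The probabilistic method pays for this with the $d^{2/3}$ term, because the dependency neighbourhood of ``this monochromatic path does not grow too long'' is not genuinely bounded, and discharging becomes unwieldy once the unavoidable configurations must certify the absence of long monochromatic paths rather than merely control degrees. A plan that actually closes the gap would need a new way to certify acyclicity of each colour class from local data --- a clever orientation, or a potential function monotone along monochromatic paths --- and even the case of $d$-regular \emph{bipartite} graphs, where every cycle is even, is a natural milestone that present techniques do not reach in full.
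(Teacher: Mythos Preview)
The statement is a \emph{conjecture} in the paper, not a theorem: it is quoted as the open Linear Arboricity Conjecture of Akiyama, Exoo, and Harary, and the paper offers no proof of it. So there is no paper proof to compare your attempt against. Your proposal correctly identifies the problem as open and sketches the standard landscape of partial results --- the reduction to regular graphs, Alon's probabilistic/Local-Lemma approach giving $la(G)\le d/2+o(d)$, the exact bound under a large-girth hypothesis, and discharging for small $\Delta$ or sparse classes --- which is exactly the background the paper cites and then generalizes from $f=2$ to arbitrary $f$ in Theorems~\ref{large-girth} and~\ref{asymptotic-undirected}. Your account of why the argument stalls (the non-local nature of acyclicity) is accurate; nothing in the paper claims to overcome it for the full conjecture.
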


The Linear Arboricity Conjecture has been verified for many classes of simple graphs, including complete bipartite graphs \cite{AkExHa}, series-parallel graphs \cite{Wu2}, planar graphs \cite{Wu1}, and when $\Delta(G) \in \{3, 4, 5, 6, 8, 10\}$ (see \cite{AkExHa, AkExHa2, EnPe, Gu3}). Alon \cite{Al1} proved that the Linear Arboricity Conjecture nearly holds for graphs with sufficiently large girth, and that it holds for all simple graphs asymptotically. Subsequent asymptotic improvements were given by Alon \cite{Al0}, by Ferber, Fox, and Jain \cite{FeFoJa}, and by Lang and Postle \cite{LaPo}, the latter of whom have given the currently best known asymptotic bound $la(G) \le \Delta(G)/2 + O(\Delta(G)^{1/2} (\log \Delta(G))^{1/4})$. The Linear Arboricity Conjecture has also been proven for graphs of bounded sparsity (e.g., bounded degeneracy, treewidth, pseudoarboricity) when the maximum degree is sufficiently large (see \cite{BaBiFrPa, ChHaYu, TaWu, Wd}).

There has not been as much work on the linear arboricity of multigraphs or on the degree-$f$ arboricity $a_f(G)$ for $f \neq 2$. Conjecture \ref{linear-arboricity} does not extend to general multigraphs $G$ since, for example, $la(G) = \Delta(G)$ when $G$ consists of $\Delta(G)$ parallel edges between two vertices. A\"it-djafer \cite{Ai} generalized Conjecture \ref{linear-arboricity} to the statement $la(G) \le \lceil (\Delta(G)+\mu(G))/2 \rceil$ for every multigraph $G$, where $\mu(G)$ is the edge-multiplicity of $G$. She verified this when $\mu(G) \ge \Delta(G)-2$, as well as when $\Delta(G)$ is close to a power of 2 and $\mu(G)$ is close to $\Delta(G)/2$. Caro and Roditty \cite{CaRo} proved that for constant functions $f = t$, every $k$-degenerate simple graph $G$ satisfies $a_t(G) \le \lceil (\Delta(G) + (t - 1)k - 1)/t \rceil$.

This paper focuses on a strong generalization of the Linear Arboricity Conjecture due to Truszczy\'nski  \cite{Tr}. For a multigraph $G$ and function $f : V(G) \rightarrow \mathbb{Z}_{\ge 2}$, define the weighted maximum degree parameter
\begin{align*}
	\Delta_f(G) = \max_{v \in V(G)} \left\lceil \frac{d(v)}{f(v)} \right\rceil.
\end{align*}
Observe that $a_f(G) \ge \Delta_f(G)$ and $a_f(G) \ge a(G)$. Truszczy\'nski conjectured the following when $f$ is a constant function.

\begin{conj} \label{degree-f-arboricity-1}
For every multigraph $G$ and function $f : V(G) \rightarrow \mathbb{Z}_{\ge 2}$, we have
\begin{align*}
	a_f(G) \le \max \{\Delta_f(G) + 1, a(G)\}.
\end{align*}
\end{conj}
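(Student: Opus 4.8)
Set $k = \max\{\Delta_f(G)+1,\, a(G)\}$. The first move would be to reduce the inequality to a cleaner, $f$-free statement. For every vertex $v$ we have $k \ge \Delta_f(G)+1 \ge \lceil d(v)/f(v)\rceil + 1 > d(v)/f(v)$, so $k\,f(v) \ge d(v)+1$ and hence $\lceil d(v)/k\rceil \le f(v)$. It therefore suffices to prove that $G$ admits a decomposition $E(G) = F_1 \cup \dots \cup F_k$ into forests with $d_{F_i}(v) \le \lceil d(v)/k \rceil$ for all $i$ and all $v$ --- a ``degree-balanced'' forest decomposition --- since each such $F_i$ is then automatically a degree-$f$ forest, giving $a_f(G) \le k$.

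To build such a decomposition I would combine matroid union with a local exchange argument. Since $k \ge a(G)$, the Nash-Williams formula quoted above guarantees at least one decomposition of $G$ into $k$ forests; among all of them, fix one minimizing the potential
\[
\Phi \;=\; \sum_{i=1}^{k}\,\sum_{v \in V(G)} \big(d_{F_i}(v) - \lceil d(v)/k\rceil\big)_+ .
\]
If $\Phi > 0$, pick $i$ and $v$ with $d_{F_i}(v) > \lceil d(v)/k\rceil$; an averaging argument forces some $j \ne i$ with $d_{F_j}(v) < d_{F_i}(v)$, so there is an edge $e$ incident to $v$ that we would like to move from $F_i$ into $F_j$. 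If $F_j + e$ is still a forest, we are done in one step; otherwise $F_j + e$ contains a unique cycle $C$, and I would delete an edge $e' \in C \cap F_j$ (choosing one not incident to $v$ whenever possible), add it to $F_i$, and iterate --- the standard augmenting walk in the exchange graph of two forests. The goal is to show that some such walk can always be completed so that its net effect strictly decreases $\Phi$, forcing $\Phi = 0$ after finitely many steps.

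The crux --- and the step I expect to be genuinely hard --- is controlling the degrees at the interior vertices of the exchange walk: every swap $e' \leftrightarrow e$ lowers a forest-degree somewhere and raises one elsewhere, and the walk must be routed so that no new overload is created. For \emph{simple} graphs the obstructing configurations are short, so under a large-girth hypothesis they simply cannot occur and the argument should close; for general simple graphs I would instead follow Alon's strategy for the Linear Arboricity Conjecture --- pass to an Eulerian orientation of a regularization of $G$, split each vertex so as to reduce to a digraph of bounded out-degree (and in-degree), and then apply the Lov\'asz Local Lemma to choose the $k$ color classes so that each is simultaneously acyclic and degree-bounded, carried out directly in terms of degree-$f$ branchings as the abstract promises. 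For arbitrary \emph{multigraphs}, however, parallel edges rigidify the local structure at a vertex, and I would expect precisely this rigidity to defeat the exchange argument, so that the single unit of slack in $\Delta_f(G)+1$ is not enough --- which is consistent with the paper's announced disproof of the conjecture for multigraphs and its retreat to the simple-graph results.
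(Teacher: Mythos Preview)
The statement you are attempting to prove is not a theorem but a \emph{conjecture}, and the paper's contribution is precisely to \emph{disprove} it for general multigraphs (Theorem~\ref{degree-f-arboricity-large}). There is therefore no proof in the paper to compare against; instead, the paper exhibits explicit multigraphs $G_t$ whose \emph{fractional} degree-$t$ arboricity already exceeds $\max\{\Delta_t(G_t),a(G_t)\}$ by a factor $c_t>1$, and then scales by edge-multiplication $G_t\mapsto mG_t$ to push $a_t$ arbitrarily far above the conjectured bound. Your closing paragraph correctly senses that parallel edges are fatal, but the framing of the rest --- reduce to a ``degree-balanced'' forest decomposition with $d_{F_i}(v)\le\lceil d(v)/k\rceil$ and then argue by potential-minimizing exchanges --- is an attempt to prove a statement that is simply false, so the exchange walk you describe \emph{must} get stuck, and not merely because of some local rigidity you failed to handle: the fractional lower bound shows that no decomposition of the required type can exist on $mG_t$ at all.

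For the partial results on simple graphs, your high-level instinct (orient, pass to a directed problem, invoke Alon's Local Lemma machinery, work with degree-$f$ branchings) is in the right spirit and does match the paper's route to Theorems~\ref{large-girth} and~\ref{asymptotic-undirected}, but the concrete large-girth mechanism is quite different. The paper does \emph{not} run a forest-exchange argument; it orients $G$ via the Entringer--Tolman theorem, applies Hakimi--Kariv's bipartite $f$-coloring theorem to the split bipartite graph to obtain $d$ directed degree-$f$ pseudoforests outright, and then uses an independent-transversal theorem on the line graph to extract a single matching hitting every monochromatic directed cycle --- one extra colour, no iterative exchanges. Your intermediate target of a fully balanced forest decomposition is a genuine strengthening of what is needed (the paper only requires $d_{F_i}(v)\le f(v)$, not $d_{F_i}(v)\le\lceil d(v)/k\rceil$), and aiming for it is more likely a harder detour than a shortcut.
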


More strongly, Truszczy\'nski conjectured that for every multigraph $G$ and integer $t \ge 2$, we have $a_t(G) = \max \{ \lceil \Delta(G)/t \rceil, a(G)\}$ unless $a(G) = \Delta(G)/t$, in which case we have $a_t(G) \in \{ \Delta(G)/t, \Delta(G)/t + 1 \}$. He proved this when $G$ is a complete multigraph with all edges having the same multiplicity, when $G$ is a complete bipartite multigraph with all edges having the same multiplicity, when the underlying simple graph of $G$ is a forest, and when $t \ge \Delta(G) - a(G) + 1$. For the particular case $f = 2$ of linear arboricity, Conjecture \ref{degree-f-arboricity-1} asserts that every multigraph $G$ satisfies
\begin{align*}
	la(G) \le \max \left\{ \left\lceil \Delta(G)/2 \right\rceil + 1, a(G) \right\}.
\end{align*}
On the other hand, Nash-Williams' Theorem \cite{Na} for arboricity above implies that $a(G) \le \lceil (\Delta(G) + 1)/2 \rceil$ for every simple graph $G$. This shows that Conjecture \ref{degree-f-arboricity-1} is close to a generalization of the Linear Arboricity Conjecture, both to multigraphs and to other vertex weight functions $f$.

Our first main result is that Conjecture \ref{degree-f-arboricity-1} is false in a strong sense for general multigraphs $G$, for any fixed constant function $f = t$.

\begin{restatable}{thm}{degreefarboricitylarge} \label{degree-f-arboricity-large}
For every integer $t \ge 2$, there exists a constant $c_t > 1$ such that the following holds. For infinitely many integers $d \ge 2$, there exists a multigraph $G$ such that $\max \{\Delta_t(G), a(G)\} = d$ and $a_t(G) \ge c_t d$. 
\end{restatable}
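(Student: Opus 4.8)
The plan is to construct, for each fixed $t\ge 2$, an infinite family of multigraphs $G$ with $\Delta_t(G)=a(G)=d$ yet $a_t(G)\ge c_t d$. Since $a_t(G)\ge\Delta_t(G)$ and $a_t(G)\ge a(G)$ always hold, and since (by Nash--Williams) $a(G)$ already accounts for the acyclicity constraint while $\Delta_t(G)$ already accounts for the degree constraint, the whole point is to force these two constraints to \emph{interfere}: one wants a multigraph that decomposes into $d$ forests and also into $d$ subgraphs of maximum degree $\le t$, but not into $d$ forests of maximum degree $\le t$. So I would first isolate a lower bound on $a_t(G)$ that can exceed $\max\{\Delta_t,a\}$. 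Fix a coloring into $k:=a_t(G)$ degree-$t$ forests $F_1,\dots,F_k$. Each $F_i$ is acyclic, so $e(F_i)=|\{v:d_{F_i}(v)\ge 1\}|-c(F_i)$, where $c(F_i)$ is its number of components; summing over $i$ gives $e(G)\le k\,|V(G)|-\sum_i c(F_i)$, i.e.\ $k\ge\bigl(e(G)+\sum_i c(F_i)\bigr)/|V(G)|$. The degree bound then limits how often a given vertex $v$ may be a \emph{low-degree} vertex of a color class; for $t=2$, $v$ is a path-endpoint of at most $2k-d(v)$ of the $F_i$, while in a forest the number of leaves equals $2c(F_i)+\sum_{w:d_{F_i}(w)\ge 3}(d_{F_i}(w)-2)$, and for general $t$ one has the analogous forest identity relating leaves, components, and high-degree vertices. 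Combining the "leaf/endpoint count is large'' input (coming from the structure of $G$) with the "leaf/endpoint count is small'' input (coming from the degree budget) is what should drive $k$ above $\max\{\Delta_t,a\}$.

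To make the leaf count unavoidably large I would build $G$ from many local "dead-end'' gadgets distributed over low-degree vertices: the cheapest such gadget is a double edge to a vertex of degree two, which can never be an internal vertex of \emph{any} degree-$t$ forest (two parallel edges would form a $2$-cycle). Aggregating enough of these — together with vertices of carefully chosen odd or near-extremal degree, which are forced to be endpoints in many classes — should turn the inequality above into $k\ge(1+\epsilon_t)d$. The multiplicities must be chosen large enough that the dead-end gadgets actually exist, but small enough, and supported on vertices of sufficiently low degree, that (i) no vertex degree exceeds $td$, so $\Delta_t(G)=d$, and (ii) by Nash--Williams every subset $S$ has $e(S)\le d(|S|-1)$ (with equality somewhere), so $a(G)=d$. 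Condition (ii) forces the bulk of the edge density into a dense "core'' whose multiplicity is calibrated exactly to $d$ — a complete or complete bipartite multigraph, precisely the regime in which Truszczy\'nski himself verified his conjecture — so the counterexample must come from the structure hung \emph{around} that calibrated core, not from the core.

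The main obstacle is exactly this tension, and I expect it to be the crux of the whole argument: essentially every device that makes degree-$t$ forests inefficient is "seen'' either by $a(G)$ (a dense subgraph) or by $\Delta_t(G)$ (a high-degree vertex), and if one makes $G$ globally sparse to avoid both, then $e(G)/|V(G)|\approx a(G)$ is only a constant and there is nothing to amplify as $d\to\infty$. The delicate point is therefore to produce the forced-endpoint structure "cost-neutrally'': spread it over many low-degree vertices, attach the small components to the core only through its limited degree budget $\approx td\cdot(\text{core size})$, keep the whole configuration inside the Nash--Williams density budget of an arboricity-$d$ graph, and still have the aggregate inequality beat $d$ by a fixed factor $c_t>1$ (which will degrade as $t$ grows but should stay bounded away from $1$). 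Once the multigraph is pinned down, the three verifications $\Delta_t(G)=d$, $a(G)=d$, and $a_t(G)\ge c_t d$ are each a direct computation; and because all the ingredients — acyclicity of color classes, the degree bound, and the leaf/source counting — have exact analogues for branchings, the same construction should also yield the directed statement claimed in the abstract.
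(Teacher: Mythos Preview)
Your proposal is not a proof but a strategy outline, and you explicitly flag its gap yourself: you identify ``the main obstacle'' --- producing forced-leaf structure without it being detected by either $a(G)$ or $\Delta_t(G)$ --- and then do not resolve it. No concrete multigraph is exhibited, and the counting inequality $k\ge (e(G)+\sum_i c(F_i))/|V(G)|$ is never cashed in against a construction. Worse, your guiding intuition that the density budget forces a complete or complete-bipartite ``core'' calibrated to $d$, with the obstruction hung around it, is off the mark; the paper's construction has nothing of the sort.

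The paper sidesteps your obstacle entirely via two ideas you are missing. First, it relaxes to the \emph{fractional} degree-$t$ arboricity $a_t^\ast(G)$, the LP optimum of the natural covering program, and observes that $a_t(mG)\ge a_t^\ast(mG)=m\cdot a_t^\ast(G)$ while $\Delta_t(mG)\le m\,\Delta_t(G)$ and $a(mG)\le m\,a(G)$. Hence it suffices to find a \emph{single} small multigraph $G_t$ with $a_t^\ast(G_t)>\max\{\Delta_t(G_t),a(G_t)\}$; taking $G=mG_t$ then gives the theorem for all even $d=2m$ with $c_t=a_t^\ast(G_t)/2$. Second, that one small example is produced explicitly: a $6$-cycle with alternating multiplicities $1,2$, a chord between antipodal vertices $u,v$, and $t-2$ parallel pendant pairs at each of $u,v$; one checks $\Delta_t(G_t)=a(G_t)=2$ and certifies $a_t^\ast(G_t)\ge(4t+7)/(2t+3)$ by writing down a feasible dual solution (edge weights) and verifying the constraint $\sum_{e\in F}x_e\le1$ for every degree-$t$ forest $F$ of the underlying simple graph via a short case analysis. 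This yields $c_t\ge(4t+7)/(4t+6)$. The leaf-counting and endpoint-budget arguments you sketch are superseded by LP duality: the dual solution encodes exactly the ``interference'' you were trying to capture, and the blow-up $G\mapsto mG$ amplifies it linearly without any of the balancing act you anticipated.
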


We will prove Theorem \ref{degree-f-arboricity-large} by exhibiting multigraphs $G_t$ that have large \textit{fractional} degree-$t$ arboricity. We will then obtain the graph $G$ of the theorem by replacing every edge in $G_t$ with many parallel edges, which will make $a_t(G)$ and $\max\{\Delta_t(G), a(G)\}$ grow arbitrarily far apart by a constant factor $c_t > 1$. We will show that we can take the constant $c_t$ in the theorem to satisfy $c_t \ge (4t + 7)/(4t + 6)$ for every integer $t \ge 2$, with slight improvements $c_2 \ge 9/8, c_3 \ge 15/14, c_4 \ge 21/20$ for $t \in \{2,3,4\}$.

The falsity of Conjecture \ref{degree-f-arboricity-1} stands in contrast to other theorems and open conjectures on $f$-colorings. For example, in \cite{Wd} this paper's author proved that a result analogous to Conjecture \ref{degree-f-arboricity-1} holds for the degree-$f$ pseudoarboricity of a multigraph. A \textit{pseudoforest} is a multigraph where every component has at most one cycle (possibly a loop). The \textit{pseudoarboricity} $pa(G)$ of a multigraph $G$ is the minimum number of colors required to edge-color $G$ into pseudoforests. Analogous to Nash-Williams' Theorem, a theorem of Hakimi \cite{Ha1} states that the pseudoarboricity of a multigraph $G$ (possibly with loops) is given by
\begin{align*}
	pa(G) = \max_{S \subseteq V(G), |S| \ge 1} \left\lceil \frac{e(S)}{|S|} \right\rceil.
\end{align*}
Given a multigraph $G$ (possibly with loops) and function $f : V(G) \rightarrow \mathbb{Z}_{\ge 2}$, the \textit{degree-$f$ pseudoarboricity} $pa_f(G)$ of $G$ is the minimum number of colors required to edge-color $G$ into degree-$f$ pseudoforests. It was shown in \cite{Wd} that the degree-$f$ pseudoarboricity has the exact formula
\begin{align*}
	pa_f(G) = \max \{\Delta_f(G), pa(G)\}.
\end{align*}
This result has a similar form to a conjecture on the $f$-chromatic index $\chi_f'(G)$ due to Nakano, Nishizeki, and Saito \cite{NaNiSa}. Their conjecture, in turn, is a generalization of the well-known Goldberg-Seymour Conjecture for the chromatic index \cite{Go1, Se2} (see also \cite{ChJiZa, St}).

On the other hand, it was observed in \cite{Wd} that the above formula for $pa_f(G)$ implies the following approximation of Conjecture \ref{degree-f-arboricity-large}.

\begin{thm}\label{degree-f-arboricity-approximation}
For every multigraph $G$ and function $f : V(G) \rightarrow \mathbb{Z}_{\ge 2}$, we have
\begin{align*}
	a_f(G) \le 2pa_{2f}(G) \le \max \{\Delta_f(G) + 1, 2pa(G)\}.
\end{align*}
\end{thm}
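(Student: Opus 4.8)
The plan is to prove the two inequalities in the chain separately. For the second one, $2\,pa_{2f}(G)\le \max\{\Delta_f(G)+1,\,2\,pa(G)\}$, I would just invoke the exact formula $pa_{2f}(G)=\max\{\Delta_{2f}(G),\,pa(G)\}$ from \cite{Wd}, so that $2\,pa_{2f}(G)=\max\{2\Delta_{2f}(G),\,2\,pa(G)\}$, and then it remains only to check the elementary inequality $2\lceil a/(2b)\rceil\le \lceil a/b\rceil+1$ for all positive integers $a,b$. This I would verify by writing $a=2bq+r$ with $0\le r<2b$ and examining the three cases $r=0$, $1\le r\le b$, and $b<r<2b$; applied vertex by vertex with $a=d(v)$, $b=f(v)$ it gives $2\Delta_{2f}(G)\le\Delta_f(G)+1$, which is what is needed.

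For the first inequality $a_f(G)\le 2\,pa_{2f}(G)$, I would reduce it to the following lemma: \emph{every degree-$2f$ pseudoforest $P$ decomposes into two degree-$f$ forests}. (This genuinely requires $f\ge 2$: the triangle with $f\equiv 1$ is a counterexample, consistent with the failure of the target inequality when $f\equiv 1$.) Granting the lemma, one takes an optimal edge-decomposition of $G$ into $pa_{2f}(G)$ degree-$2f$ pseudoforests and splits each of them into two degree-$f$ forests, doubling the number of colors.

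To prove the lemma I would work one connected component $K$ of $P$ at a time; tree components are the degenerate easy case, so assume $K$ has a unique cycle $Z$ (of length at least $2$, possibly a digon). For each vertex $v$ of $Z$ let $T_v$ be the tree of $K$ hanging off $v$, with $v\in T_v$; the edge sets $E(Z)$ and the $E(T_v)$ partition $E(K)$, and distinct $T_v,T_w$ are vertex-disjoint, so the colorings of the $T_v$ interact with each other and with $Z$ only through the degree at $v$. I would first $2$-color $E(Z)$ by \emph{any} coloring that is not monochromatic, recording for each cycle vertex $v$ the number $\rho_v\in\{0,1,2\}$ of its red cycle-edges. Then I would color each $T_v$ so that the red-degree of its root $v$ lies in the interval $J_v=\bigl[\,d_{T_v}(v)-f(v)+2-\rho_v,\ f(v)-\rho_v\,\bigr]\cap[0,d_{T_v}(v)]$, while every non-root vertex of $T_v$ receives at most $f$ edges of each color. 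The definition of $J_v$ is exactly what makes adding back the $\rho_v$ red and $2-\rho_v$ blue cycle-edges at $v$ leave at most $f(v)$ edges of each color at $v$; and since $Z$ is not monochromatic, neither color class of $K$ contains all of $Z$, so, $Z$ being the unique cycle of $K$, both classes are forests. The two facts to verify are that $J_v\ne\emptyset$ and that a coloring of a tree $T_v$ realizing \emph{any} prescribed root red-degree in $[0,d_{T_v}(v)]$ exists. The first holds because $d_{T_v}(v)=d_K(v)-2\le 2f(v)-2$. The second I would prove by a bottom-up induction on $T_v$: every rooted subtree can be $2$-colored so as to absorb either $0$ or $1$ red edge coming down from its parent (base case a leaf; inductive step by freely choosing the colors of the child edges of a vertex), so the root's red-degree can indeed be set to any value in $[0,d_{T_v}(v)]$.

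I expect the unicyclic case to be the only real obstacle: one must keep the unique cycle bichromatic \emph{and} meet the degree caps at the cycle vertices simultaneously, which is delicate precisely at a ``tight'' cycle vertex $v$ with $d_K(v)=2f(v)$, where there is no slack. The resolution — and the one place the hypothesis $f\ge 2$ is used — is that such a $v$ has at least $2f(v)-2\ge 2$ edges in its hanging tree to redistribute between the two colors (this is what forces $J_v\ne\emptyset$ when $\rho_v=0$), and the choice of $J_v$ above packages that redistribution correctly.
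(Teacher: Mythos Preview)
Your argument is correct. Note, however, that the paper does not actually prove this theorem: it merely records it as an observation from \cite{Wd}, where the formula $pa_{g}(G)=\max\{\Delta_{g}(G),pa(G)\}$ (valid for $g\ge 2$) is established. So there is no ``paper's own proof'' to compare against beyond that citation.

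For the second inequality your approach is exactly what the paper intends: plug $g=2f$ into the formula and reduce to $2\lceil a/(2b)\rceil\le\lceil a/b\rceil+1$. (A one-line version of your case analysis: if $m=\lceil a/b\rceil$ then $a\le mb$, so $\lceil a/(2b)\rceil\le\lceil m/2\rceil\le(m+1)/2$.)

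For the first inequality you supply content the paper omits, namely an explicit proof that every degree-$2f$ pseudoforest splits into two degree-$f$ forests. Your unicyclic argument is sound. Two small points worth tightening if you write it out in full: (i) to conclude $J_v\ne\emptyset$ you need, in addition to $d_{T_v}(v)\le 2f(v)-2$, the checks $f(v)-\rho_v\ge 0$ and $2-\rho_v\le f(v)$ so that the interval actually meets $[0,d_{T_v}(v)]$; both are immediate from $f(v)\ge 2$ and $\rho_v\in\{0,1,2\}$. (ii) In the inductive step for a non-root vertex $u$ of $T_v$, the fact that the $d_{T_v}(u)-1$ child edges can be split to keep both colours at most $f(u)$, regardless of the parent-edge colour, uses $d_{T_v}(u)\le 2f(u)$; this holds because $u$ lies off the cycle, so $d_{T_v}(u)=d_K(u)\le 2f(u)$. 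The tree-component ``easy case'' can alternatively be dispatched by Theorem~\ref{Konig-f}, since a tree is bipartite and $\Delta_f$ of a degree-$2f$ tree is at most $2$.
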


Note that $pa(G) \le a(G) \le 2pa(G)$ for every multigraph $G$. Thus, Theorem \ref{degree-f-arboricity-1} shows that we cannot generally decrease $2pa(G)$ to $a(G)$ in the above upper bound on $a_f(G)$. This raises the following question.

\begin{ques} \label{question}
For a given bounded function $f$, what is the optimal constant $c = c_f$ so that $a_f(G) \le (c + o(1)) \max \{ \Delta_f(G), a(G) \}$ for every multigraph $G$? More generally, what pairs of constants $c, c'$ are optimal so that $a_f(G) \le \max \{ (c + o(1))\Delta_f(G), (c' + o(1))a(G) \}$ for every multigraph $G$?
\end{ques}

Theorem \ref{degree-f-arboricity-approximation} shows that the constant $c_f$ of this question satisfies $c_f \le 2$ for every function $f$. (This could have been derived from the simpler inequality $a_f(G) \le 2pa_f(G)$.) Theorem \ref{degree-f-arboricity-large} shows that generally $c_f > 1$, more precisely $c_f \ge (4t + 7)/(4t + 6)$ whenever $f$ has maximum value $t$, with slight improvements for $t \in \{2,3,4\}$.

We conjecture that Truszczy\'nski's Conjecture \ref{degree-f-arboricity-1} holds when we restrict $G$ to being a simple graph. Our next main results are support for this conjecture. Notice that if $G$ is a simple graph that is $\Delta(G)$-regular, then $a(G) = \lceil (\Delta(G) + 1)/2 \rceil$ by Nash-Williams' Theorem, and then Conjecture \ref{degree-f-arboricity-1} for simple graphs would be implied by the Linear Arboricity Conjecture. But for more general simple graphs $G$, a reduction to the Linear Arboricity Conjecture is not clear. One of our results is that Conjecture \ref{degree-f-arboricity-1} nearly holds for simple graphs with sufficiently large girth. 

\begin{restatable}{thm}{largegirth}\label{large-girth}
Let $G$ be a simple graph, let $f : V(G) \rightarrow \mathbb{Z}_{\ge 2}$ be a function, and let $d = \max \{\Delta_f(G), a(G)\}$. If $G$ has girth $g \ge 4d$, then $a_f(G) \le d + 1$.
\end{restatable}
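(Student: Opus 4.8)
The plan is to reduce to a degree-$f$ pseudoforest decomposition and then repair its cycles with a single extra color, using the girth bound to push a probabilistic argument through. Since $pa(G)\le a(G)\le d$ and $\Delta_f(G)\le d$, the identity $pa_f(G)=\max\{\Delta_f(G),pa(G)\}$ gives $pa_f(G)\le d$, so $E(G)$ partitions into degree-$f$ pseudoforests $P_1,\dots,P_d$. In each $P_i$ the cyclic components are vertex-disjoint, so the family $\mathcal C$ of all cycles appearing in $P_1,\dots,P_d$ is pairwise edge-disjoint, and each member of $\mathcal C$ has length at least $4d$ because $g\ge 4d$. If for every $C\in\mathcal C$ we delete one edge $e_C\in E(C)$ and put $R=\{e_C:C\in\mathcal C\}$, then each $P_i\setminus R$ is a forest (every cyclic component has lost an edge) and is still degree-$f$ as a subgraph of $P_i$; thus the task reduces to choosing the $e_C$ so that $R$, used as the $(d+1)$-st color class, is itself a degree-$f$ forest, after which $P_1\setminus R,\dots,P_d\setminus R,R$ is the desired decomposition.

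I would make this choice with the Lov\'asz Local Lemma. The bad events are: for each vertex $v$, that more than $f(v)$ edges of $R$ meet $v$; and for each cycle $Z$ of $G$, that $Z\subseteq R$. If each $e_C$ is chosen roughly uniformly along $C$, the first event has small probability — $v$ lies on at most one cycle of each $P_i$, hence on at most $d$ cycles of $\mathcal C$, and each contributes an edge at $v$ with probability at most $2/(4d)=1/(2d)$, so the expected number of $R$-edges at $v$ is at most $1/2$, comfortably below $f(v)\ge 2$ — and the second has probability at most $(1/(2d))^{|Z|}$, which is minuscule since $|Z|\ge 4d$.

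The crux of the proof, and the reason for the exact threshold $4d$, is the dependency structure. Drawing one random variable per cycle makes a bad event at $v$ depend on the choices for every cycle through $v$, and these can reach arbitrarily far in $G$, so the dependency graph need not have bounded degree and the symmetric Local Lemma does not apply. The fix I would use is to generate the $e_C$ from local randomness: put i.i.d.\ values on the vertices and let the deleted edges on a cycle $C$ be those at the ``local maxima'' of these values along $C$ within a cyclic window of width $2d-1$. This guarantees every cycle of $\mathcal C$ is broken (its global maximum is such a local maximum) while deleting few edges (two such maxima are more than $2d-1$ apart on $C$), and each bad event now depends only on values in a bounded neighborhood — for a vertex $v$, only on the $O(d^2)$ vertices lying within cyclic distance $2d$ of $v$ on one of the $\le d$ cycles through $v$ — so the dependency graph has degree bounded in terms of $d$ and the Local Lemma criterion can be checked. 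I expect this verification to be tight: the extremal example is a $2d$-regular graph of girth exactly $4d$, where $a(G)=d$ and $\Delta(G)=2d$, where there are about $(2d)^{\ell}$ cycles of length $\ell$ against $\Pr[Z\subseteq R]\approx (2d)^{-\ell}$, so the estimates must balance the width-$(2d-1)$ windows against this count and exploit the room given by the $(d+1)$-st color — this balancing is the hard part. Finally, for the stronger directed version one first orients $G$ with all in-degrees at most $d$, possible because $pa(G)\le d$, so that every degree-$f$ forest above is simultaneously a branching, and the same repair step applies verbatim; an alternative route, closer to the degeneracy-based linear arboricity arguments, is to color the edges greedily along a $(2d-1)$-degeneracy ordering and kill the (cycle-closing and over-capacity) bad events by the Local Lemma, the obstacle there being the same.
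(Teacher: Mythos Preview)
Your overall plan---decompose $E(G)$ into $d$ degree-$f$ pseudoforests and repair the monochromatic cycles with a single extra colour---is exactly the paper's, but your repair step is where the argument stalls, and the paper closes it with a different and much cleaner tool.

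The observation you are not exploiting is that the union $G'$ of all monochromatic cycles already has $\Delta(G')\le 2d$: cycles of a pseudoforest are vertex-disjoint, so each vertex lies on at most one cycle per colour and hence on at most $d$ cycles in total. Thus the line graph $H=L(G')$ has $\Delta(H)\le 4d-2$, the monochromatic cycles partition $V(H)$ into classes of size at least $g\ge 4d=\Delta(H)+2$, and the Aharoni--Alon--Berger theorem on independent transversals in line graphs of simple graphs produces, deterministically, a \emph{matching} $M$ of $G'$ meeting every monochromatic cycle. A matching is automatically a degree-$f$ forest (indeed a degree-$f$ branching), so $M$ serves as the $(d+1)$-st colour with nothing further to verify; the threshold $g\ge 4d$ is precisely the hypothesis $|V_i|\ge\Delta(H)+2$ of that theorem. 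The paper in fact runs this in the directed setting first---orienting $G$ so that $\Delta^-(D),\Delta^+_{f-1}(D)\le d$, colouring the bipartite split of $D$ by the $f$-colouring K\"onig theorem, and noting that the cycles in each colour class are then directed cycles---and deduces the undirected statement afterwards; your shortcut via $pa_f(G)=\max\{\Delta_f(G),pa(G)\}$ is a legitimate alternative route to the same pseudoforest decomposition.

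Your Local Lemma route, by contrast, draws one random variable per cycle, and you correctly diagnose that this makes the dependency degree unbounded because cycles can be arbitrarily long. The local-maxima workaround localises the randomness but now deletes several edges per cycle, so you would still have to show that $R$ has degree at most $f(v)$ at each $v$ \emph{and} that $R$ is acyclic; you carry out neither estimate, and you yourself flag the balancing as ``the hard part''. Moreover, the events $B_Z$ range over all cycles $Z$ of $G$, of which there can be exponentially many, so bounding the dependency degree ``in terms of $d$'' is not straightforward even with localised randomness. The independent-transversal theorem is exactly the packaged form of the argument you are reaching for---applied, crucially, in the bounded-degree line graph $H$ rather than in $G$---and by returning a matching it makes both the degree bound and the acyclicity of the extra colour automatic.
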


Our other result is that Conjecture \ref{degree-f-arboricity-1} holds for all simple graphs asymptotically when the function $f$ is bounded.

\begin{restatable}{thm}{asymptoticundirected}\label{asymptotic-undirected}
For every integer $t \ge 2$, there exists a real constant $c_t > 0$ such that for every simple graph $G$ and function $f : V(G) \rightarrow \mathbb{Z}_{\ge 2}$ with maximum value at most $t$, we have
\begin{align*}
	a_f(G) \le d + c_td^{3/4} (\log d)^{1/2},
\end{align*} 
where $d = \max \{\Delta_f(G), a(G) \}$.
\end{restatable}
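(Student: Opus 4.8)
The plan is to mirror Alon's approach to the asymptotic Linear Arboricity Conjecture, combined with the near-extremal large-girth result (Theorem~\ref{large-girth}) via a two-phase strategy: first destroy short cycles and reduce the degrees, then apply a high-girth argument to the leftover. Write $d = \max\{\Delta_f(G), a(G)\}$ and assume $d$ is large, since otherwise the bound is trivial. Since $a(G) \le d$, by Nash--Williams we can orient $G$ so that every out-degree is at most $d$; more useful, we can decompose $G$ into $d$ forests. We also have $d(v) \le f(v)\,\Delta_f(G) \le t d$ for every vertex, so $\Delta(G) \le td$. The first step is a random partition: color the edges of $G$ with $k \approx d^{1/4}(\log d)^{-1/2}$ "super-colors" uniformly and independently, obtaining subgraphs $G_1,\dots,G_k$. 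By a Chernoff/Lovász-Local-Lemma argument (the degrees are bounded by $td$, so the dependency is controlled), with positive probability each $G_i$ has maximum degree at most roughly $\Delta(G)/k + O(\sqrt{(\Delta(G)/k)\log d})$, and — crucially — each $G_i$ locally looks sparse. The point of the partition is that within each $G_i$ the relevant parameter $d_i := \max\{\Delta_{f}(G_i), a(G_i)\}$ is about $d/k$, so a $d_i+1$-coloring of each $G_i$ would give a total of roughly $(d_i+1)k \approx d + k = d + d^{1/4}(\log d)^{-1/2}$ colors, which is even better than needed — but of course we cannot directly apply Theorem~\ref{large-girth} since $G_i$ need not have large girth.

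To handle that, inside each piece I would use a second random sparsification to kill short cycles, in the spirit of Alon's original proof. Concretely, the real target is: show that any simple graph $H$ with $\max\{\Delta_f(H), a(H)\} = m$ satisfies $a_f(H) \le m + O(m^{3/4}(\log m)^{1/2})$ directly, without the preliminary partition, by the following. Randomly split $E(H)$ into $r \approx m^{1/4}(\log m)^{-1/2}$ classes; in each class, the expected number of edges in any cycle of length $\ell \le g_0$ (for a threshold $g_0 \approx 4m/r \approx 4m^{3/4}(\log m)^{1/2}$) shrinks by a factor $r^{\ell-1}$, so one expects each class to have girth at least $g_0$; a Local Lemma argument (cycles through a fixed vertex are the bad events, and each has probability $\le (td)^{\ell}/r^{\ell-1}$, summable) makes this rigorous, while simultaneously controlling the maximum degree of each class to be $\Delta(H)/r + O(\sqrt{\cdot})$ and its arboricity to be at most $\lceil a(H)/r\rceil + O(1)$ (the latter follows from the degree bound plus girth, or from applying Nash--Williams to the pieces — this needs care). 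Then each class $H_j$ has girth $\ge g_0$ and $\max\{\Delta_f(H_j), a(H_j)\} =: m_j$ with $g_0 \ge 4 m_j$, so Theorem~\ref{large-girth} gives $a_f(H_j) \le m_j + 1$. Summing over $j$ yields $a_f(H) \le \sum_j (m_j + 1) \le m + O(r) + (\text{fluctuation terms}) = m + O(m^{3/4}(\log m)^{1/2})$, where the error term is dominated by the $O(\sqrt{(\Delta(H)/r)\log m}) \cdot r = O(\sqrt{\Delta(H) r \log m}) = O(m^{3/4}(\log m)^{1/2})$ contribution from the degree fluctuations (using $\Delta(H) \le tm$).

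The technical heart, and the main obstacle, is getting the Local Lemma to simultaneously enforce all three properties on every piece: (i) small maximum degree, (ii) large girth, and (iii) small arboricity. Items (i) and (ii) are standard (this is essentially what Alon does for $f=2$, where arboricity is not separately needed because large girth plus the degree bound already controls it for the linear-forest decomposition). For (iii) I would argue that once a piece has girth $g_0$ and maximum degree $O(m/r)$, Nash--Williams' formula controls its arboricity: for any $S$ with $|S| \ge 2$, if $|S| < g_0$ then $H_j[S]$ is a forest so $e(S) \le |S|-1$, and if $|S| \ge g_0$ then $e(S) \le \tfrac12 |S| \cdot O(m/r) \le O(m/r)\cdot|S|$, so $e(S)/(|S|-1) = O(m/r)$, giving $a(H_j) = O(m/r)$ — good enough, though one should track constants to ensure the sum telescopes to $m + o(m)$ rather than $C m$. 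The one subtlety here is that Theorem~\ref{large-girth} requires girth $g \ge 4 d_j$ where $d_j = \max\{\Delta_f(H_j), a(H_j)\}$, and this $d_j$ could be as large as $\Delta(H_j)/2 + (\text{fluctuation}) \approx tm/(2r)$ if $f \equiv 2$-ish and the degrees are uneven — so the girth threshold $g_0$ must be chosen as $\ge 4 \cdot (tm/(2r)) = 2tm/r$, i.e. $g_0 \approx 2t m^{3/4}(\log m)^{1/2}$, which is still fine since increasing $g_0$ by a constant factor only weakens the Local Lemma's slack by a constant in the exponent, absorbed by choosing $r$ with a slightly smaller constant. Finally, one wraps up by noting the directed strengthening claimed in the abstract follows by the same argument with branchings in place of forests, using the directed analogue of Theorem~\ref{large-girth}; for the present (undirected) statement this is not needed.
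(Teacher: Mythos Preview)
There are two genuine gaps. First, a random \emph{edge} partition into $r \approx m^{1/4}$ classes cannot make each class have large girth. The probability that a fixed $\ell$-cycle lands in one class is $r^{1-\ell}$, but the number of $\ell$-cycles through a given vertex in a graph of maximum degree $\Delta \le tm$ is of order $\Delta^{\ell-1}$, so the bound you write for the bad event at $v$ is of order $\sum_{\ell \le g_0}(\Delta/r)^{\ell-1}$, which diverges since $\Delta/r \approx tm^{3/4} \gg 1$. This is not ``summable'' and no Local Lemma saves it: to kill even cycles of a single length $\ell$ one needs roughly $r \gtrsim \Delta$, at which point the partition is useless. Second, even granting large girth in each piece, applying the undirected Theorem~\ref{large-girth} forces you to bound $d_j = \max\{\Delta_f(H_j), a(H_j)\}$, and your Nash--Williams argument gives only $a(H_j) \lesssim \Delta(H_j)/2 \approx tm/(2r)$. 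For $t>2$ this dominates $\Delta_f(H_j) \approx m/r$, so $\sum_j(d_j+1) \approx tm/2$, a constant factor away from $m$; you note that the constants need tracking, but in fact they do not telescope for $t>2$.

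The paper circumvents both obstacles by passing to the directed setting. It first orients $G$ via Corollary~\ref{orientation-corollary} to obtain $D$ with $\max\{\Delta^-(D),\Delta^+_{f-1}(D)\}\le d$, and then bounds $\vec{a}_f(D)$. The directed large-girth result (Theorem~\ref{large-directed-girth}) involves only $\Delta^-$ and $\Delta^+_{f-1}$, \emph{not} the arboricity of the underlying graph, so your second obstacle simply does not arise. For the first, instead of coloring arcs one randomly colors \emph{vertices} by $\phi:V(D)\to\mathbb{Z}/k\mathbb{Z}$ for a prime $k$ of order $d^{1/2}$, and sets $D_i=\{(u,v)\in E(D):\phi(v)\equiv\phi(u)+i\}$. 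Every directed cycle in $D_i$ for $i\neq 0$ then has length divisible by $k$, so directed girth $\ge k$ is structural and free; the Local Lemma is used only for the routine task of balancing in- and out-degrees across the $D_i$.
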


In particular, the answer to Question \ref{question} is $c_f = 1$ when we restrict to the class of simple graphs. This contrasts with the proof of Theorem \ref{degree-f-arboricity-large}, where the constructed multigraphs have arbitrarily many parallel edges.

The proofs of Theorem \ref{large-girth} and Theorem \ref{asymptotic-undirected} will be extensions of Alon's \cite{Al0} probabilistic proofs for the case $f = 2$ of linear arboricity. These proofs are also found in the book of Alon and Spencer \cite{AlSp}. As with Alon's proofs, it is more convenient to prove our results in the setting of directed graphs $D$. Thus, before our probabilistic proofs we will consider a directed version of a degree-$f$ forest that we call a \textit{degree-$f$ branching}, and we will write a conjecture (Conjecture \ref{degree-f-branchings}) analogous to Conjecture \ref{degree-f-arboricity-1} on what we call the \textit{directed degree-$f$ arboricity} $\vec{a}_f(D)$. We will first prove our large-girth and asymptotic results for directed graphs, and from those we will deduce Theorem \ref{large-girth} and Theorem \ref{asymptotic-undirected}.

We organize the proofs in this paper as follows. In Section \ref{proof-of-main-theorem}, we will prove Theorem \ref{degree-f-arboricity-large}. In Section \ref{directed-version}, we will formulate a version of Conjecture \ref{degree-f-arboricity-1} for directed multigraphs and then show how this directed version implies the undirected version. Finally, in Section \ref{asymptotics} we will prove Theorem \ref{large-girth} and Theorem \ref{asymptotic-undirected}.

\section{Proof of Theorem \ref{degree-f-arboricity-large}} \label{proof-of-main-theorem}
In this section, we will prove Theorem \ref{degree-f-arboricity-large} and thus disprove Conjecture \ref{degree-f-arboricity-1}. We will use a fractional relaxation of degree-$f$ arboricity $a_f(G)$. Let $G$ be a multigraph and let $f : V(G) \rightarrow \mathbb{Z}_{\ge 2}$ be a function. Let $\mathcal{F}$ be the collection of edge-sets of degree-$f$ forests in $G$. We have a variable $y_F$ for every edge-set $F \in \mathcal{F}$. The \textit{fractional degree-$f$ arboricity} $a_f^\ast(G)$ of a multigraph $G$ is the optimal value of the following linear program.
\begin{align*}\arraycolsep=5pt\def\arraystretch{2}
\begin{array}{lrr@{}ll}
	&\text{min}  & \displaystyle\sum_{F \in \mathcal{F}} y_F & &\\
	(P_0) &\text{s.t.} & \displaystyle\sum_{F \in \mathcal{F} \colon e \in F} & y_F \ge 1 & \forall e \in E(G), \\
	&& & y_F \geq 0 & \forall F \in \mathcal{F}.
\end{array}
\end{align*}
Notice that if we add the integrality constraints $y_F \in \{0, 1\}$ for all $F \in \mathcal{F}$, then a feasible solution of $(P_0)$ corresponds to a collection of degree-$f$ forests that covers all of $E(G)$. This implies that $a_f(G) \ge a_f^\ast(G)$. 

We will work with a slightly simplified linear program. Let $G'$ be the underlying simple graph of $G$, and let $\mathcal{F}'$ be the collection of edge-sets of degree-$f$ forests of $G'$. For every edge $e = uv \in E(G')$, let $\mu_e = \mu_G(u,v)$ denote the number of parallel edges between vertices $u$ and $v$ in $G$. It is easy to show that $(P_0)$ has the same optimal value as the following linear program.
\begin{align*}\arraycolsep=5pt\def\arraystretch{2}
\begin{array}{lrr@{}ll}
	&\text{min}  & \displaystyle\sum_{F \in \mathcal{F}'} y_F & &\\
	(P) &\text{s.t.} & \displaystyle\sum_{F \in \mathcal{F}' \colon e \in F} & y_F \ge \mu_e & \forall e \in E(G'), \\
	&& & y_F \geq 0 & \forall F \in \mathcal{F}'.
\end{array}
\end{align*}
The dual of $(P)$ is the following, where $x_e$ is the dual variable associated with edge $e \in E(G')$.
\begin{align*}\arraycolsep=5pt\def\arraystretch{2}
\begin{array}{lrr@{}ll}
	&\text{max}  & \displaystyle\sum_{e \in E(G')} & \mu_e x_e  &\\
	(D) &\text{s.t.} & \displaystyle\sum_{e \in F} & x_e \le 1 & \forall F \in \mathcal{F}', \\
	&& & x_e \geq 0 & \forall e \in E(G').
\end{array}
\end{align*}

Feige, Ravi, and Singh \cite{FeRaSi} proved that the fractional linear arboricity $la^\ast(G) = a_2^\ast(G)$ of a $d$-regular simple graph satisfies $la^\ast(G) \le d/2 + O(\sqrt{d})$. This is only slightly better than Lang and Postle's \cite{LaPo} more recent asymptotic upper bound for $la(G)$, but their proof is simpler. Outside of their work, fractional linear arboricity has not been studied.

To prove Theorem \ref{degree-f-arboricity-large}, we will use the following easy observation. For a multigraph $G$ and integer $m \ge 1$, let $mG$ denote the multigraph obtained from $G$ by replacing every edge of $G$ by $m$ parallel copies of that edge.

\begin{obs} \label{observation}
For every multigraph $G$, function $f : V(G) \rightarrow \mathbb{Z}_{\ge 2}$, and integer $m \ge 1$, we have
\begin{align*}
	a_f(mG) \ge a_f^\ast(mG) = m \cdot a_f^\ast(G).
\end{align*}
\end{obs}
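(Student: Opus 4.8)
The first inequality $a_f(mG) \ge a_f^\ast(mG)$ has already been observed above: a feasible $\{0,1\}$-solution of $(P_0)$ for $mG$ is precisely a cover of $E(mG)$ by degree-$f$ forests, so adding the integrality constraints to $(P_0)$ can only increase the optimum. Thus it remains to prove the equality $a_f^\ast(mG) = m \cdot a_f^\ast(G)$, and for this the plan is to work with the simplified program $(P)$ (equivalently its dual $(D)$), using the already-claimed fact that $(P_0)$ and $(P)$ have the same optimal value.

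The key point is that $mG$ and $G$ have the same underlying simple graph $G'$, hence the same family $\mathcal{F}'$ of edge-sets of degree-$f$ forests of $G'$; the only difference is that the multiplicity of an edge $e = uv \in E(G')$ is $\mu_{mG}(u,v) = m\,\mu_e$ in $mG$ rather than $\mu_e$ in $G$. Therefore the program $(P)$ written for $mG$ is obtained from $(P)$ written for $G$ simply by multiplying every right-hand side $\mu_e$ by $m$, while the constraint matrix and objective are unchanged. First I would check that $y \mapsto my$ sends feasible solutions of $(P)$ for $G$ to feasible solutions of $(P)$ for $mG$, scaling the objective by $m$; and conversely that $y \mapsto y/m$ sends feasible solutions of $(P)$ for $mG$ to feasible solutions of $(P)$ for $G$. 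Taking the infimum of the objective over feasible solutions on both sides then yields $a_f^\ast(mG) = m \cdot a_f^\ast(G)$.

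Alternatively, I would phrase the same argument dually: since $mG$ and $G$ share the simple graph $G'$, the dual $(D)$ for $mG$ has exactly the same feasible region $\{x \ge 0 : \sum_{e \in F} x_e \le 1 \text{ for all } F \in \mathcal{F}'\}$ as the dual $(D)$ for $G$, while its objective $\sum_{e \in E(G')} m\mu_e x_e$ equals $m$ times the objective for $G$; hence the optimal dual values obey the same scaling, and LP strong duality (together with the equivalence of $(P_0)$ and $(P)$) gives the claim.

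There is essentially no obstacle here — the statement is a straightforward scaling of a linear program. The only step that requires a sentence of care is invoking the equality of the optima of $(P_0)$ and $(P)$, so that the argument may be carried out in the multiplicity-parametrized formulation $(P)$ rather than the unwieldy edge-indexed formulation $(P_0)$ where $mG$ formally has $m$ copies of each edge of $G$.
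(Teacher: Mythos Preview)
Your argument is correct. The paper does not actually supply a proof of this observation; it is stated as an ``easy observation'' and left to the reader. Your approach via the simplified program $(P)$ is exactly the intended one: since $mG$ and $G$ share the same underlying simple graph $G'$ and hence the same family $\mathcal{F}'$, the passage from $G$ to $mG$ only scales the right-hand sides of $(P)$ (equivalently, the objective of $(D)$) by $m$, and the scaling $y \leftrightarrow my$ does the rest.
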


We also observe that $\Delta_f(mG) \le m \cdot \Delta_f(G)$ and $a(mG) \le m \cdot a(G)$. If we can find a multigraph $G$ where $a_f^\ast(G) = c_f \cdot \max \{\Delta_f(G), a(G) \} $ and $c_f > 1$, then
\begin{align*}
	a_f(mG) \ge m \cdot a_f^\ast(G) \ge m \cdot c_f \cdot \max \{\Delta_f(G), a(G) \} \ge c_f \cdot \max \{\Delta_f(mG), a(mG) \},
\end{align*}
which is the required inequality for Theorem \ref{degree-f-arboricity-large}. Therefore our goal is to find, for every constant function $f = t \ge 2$, a multigraph $G = G_t$ and constant $c_t > 1$ such that
\begin{align} \label{goal-equality}
	a_t^\ast(G) \ge c_t \cdot \max \{\Delta_t(G), a(G) \}.
\end{align}

An example of such a multigraph $G_t$ is shown in Figure \ref{counterexample}(a). It consists of a 6-cycle that alternates in one and two parallel edges, an edge connecting two antipodal vertices $u$ and $v$ of the cycle, and $t - 2$ parallel pairs to new vertices attached to each of $u$ and $v$. The underlying simple graph $G_t'$ of $G_t$ is shown in Figure \ref{counterexample}(b), with some edges labeled $e_1, \ldots, e_7$. Consider the following solution $x$ for $(D)$ with respect to $G_t'$:
\begin{align*}
	x_{e} = \begin{cases}
	2/(2t + 3) &\text{if } e \in \{e_1,e_3,e_7\},\\
	1/(2t + 3) &\text{otherwise}.
	\end{cases}
\end{align*}
We will show that $x$ is feasible, using the following claim.

\begin{figure}
	\centering
	\includegraphics[scale=0.94]{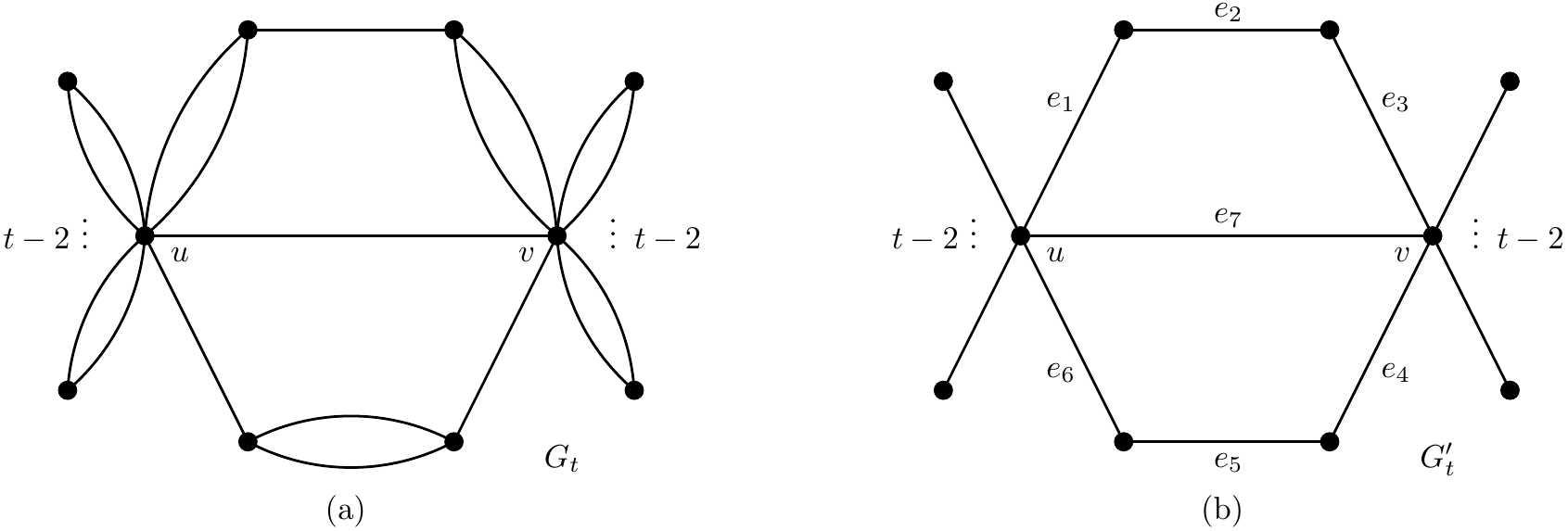}
	\caption{}
	\label{counterexample}
\end{figure}

\begin{claim} \label{claim}
In the simple graph $G_t'$, every degree-$t$ forest has at most $2t + 1$ edges, and every degree-$t$ forest containing each of $e_1, e_3, e_7$ at most $2t$ edges.
\end{claim}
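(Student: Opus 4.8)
The plan is to bound the number of edges in a degree-$t$ forest of $G_t'$ by analyzing its structure vertex-by-vertex, paying special attention to the two high-degree vertices $u$ and $v$ and the 6-cycle they sit on. Let $F$ be a degree-$t$ forest of $G_t'$. First I would set up the basic edge count. The vertex set of $G_t'$ consists of $u$, $v$, the four other cycle vertices (call them $a_1,a_2$ adjacent to $u$ and $b_1,b_2$ adjacent to $v$ in the 6-cycle, with $a_1 b_1$ and $a_2 b_2$ the ``doubled'' edges collapsed to single edges in $G_t'$), and the $2(t-2)$ pendant vertices attached to $u$ and $v$. The only vertices of degree exceeding $2$ in $G_t'$ are $u$ and $v$, each of degree $t+1$ (the edge $uv = e_1$, two cycle edges, and $t-2$ pendant edges). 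Every pendant vertex and every $a_i, b_i$ has degree at most $3$ in $G_t'$, and crucially the four cycle vertices $a_i, b_i$ have degree exactly $3$ while pendant vertices have degree $1$. Summing $d_F(w)$ over all $w$ gives $2|F|$, so I want an upper bound on $\sum_w d_F(w)$.

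The naive bound $\sum_w d_F(w) \le f(u) + f(v) + \sum_{\text{others}} d_{G_t'}(w) = 2t + (2\cdot 3 + 2\cdot 3 + 2(t-2)\cdot 1) = 2t + (12 + 2t - 4) = 4t + 8$, i.e.\ $|F| \le 2t+4$, is too weak; I need to save roughly three from this count using acyclicity. The key observation is that $F$ is a forest, so on the 6-cycle $C$ (which in $G_t'$ is $u\,a_1\,b_1\,v\,b_2\,a_2\,u$, together with the chord $e_1 = uv$) the subgraph $F$ restricted to $V(C)$ must omit at least one edge from each cycle; since $C$ plus the chord contains two edge-disjoint... actually more carefully: the six vertices $u,v,a_1,a_2,b_1,b_2$ span $7$ edges in $G_t'$ (six cycle edges plus the chord $e_1$), and a forest on $6$ vertices has at most $5$ edges, so $F$ omits at least $2$ of these $7$ edges. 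That already improves things, but I also need to account for pendant edges at $u$ and $v$ competing against the degree budget $f(u) = f(v) = t$. The cleanest route: among the $t+1$ edges at $u$, $F$ uses at most $t$; same at $v$. So $\sum_{w} d_F(w) = d_F(u) + d_F(v) + \sum_{i}(d_F(a_i)+d_F(b_i)) + \sum_{\text{pendants}} d_F(\cdot) \le t + t + \sum_i d_F(a_i, b_i) + (\text{pendant contribution})$, and I would bound the middle and pendant terms jointly by noting that every pendant edge and every cycle edge incident to $u$ or $v$ consumes part of the budget $t$ at $u$ or $v$, and then use the forest constraint on the six cycle vertices to cap $\sum_i d_F(a_i)+d_F(b_i)$ below $12$.

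For the second, stronger claim, I would additionally assume $e_1, e_3, e_7 \in F$. Here $e_1 = uv$, and $e_3, e_7$ should be (reading off Figure~\ref{counterexample}(b)) the two single cycle edges — say $e_3 = u a_1$-type and $e_7 = v b_2$-type, or more plausibly $e_3$ and $e_7$ are the two ``doubled'' edges $a_1 b_1$ and $a_2 b_2$; either way they, together with $e_1$, pin down a long path or near-cycle inside $C$. Forcing these three edges into $F$ means $F \cap C$ already has $3$ edges and cannot be completed to a spanning tree of $V(C)$ without creating a cycle unless it omits even more of the remaining $4$ cycle edges — acyclicity on the $6$ cycle vertices now forces $F$ to omit at least $3$ of the $7$ edges among $\{u,v,a_1,a_2,b_1,b_2\}$ rather than $2$ (because $e_1,e_3,e_7$ being present constrains which omissions are ``useful''), and moreover using $e_1$ at $u$ and $v$ eats into their degree-$t$ budgets, leaving room for fewer pendant edges. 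Combining, $\sum_w d_F(w) \le 4t$, i.e.\ $|F| \le 2t$.

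The main obstacle I anticipate is purely bookkeeping: correctly identifying the edges $e_1,\dots,e_7$ from the figure and getting the case analysis of which cycle edges $F$ can simultaneously contain to interact cleanly with the degree budgets at $u$ and $v$ — in particular making sure the ``save $3$'' in the second claim is genuine and not double-counted against the ``save at $u,v$'' from the $f$-constraint. I would handle this by splitting on how many of the $\le t$ edges used at $u$ (resp.\ $v$) are cycle/chord edges versus pendant edges, and in each case bounding $|F|$ directly; there are only a handful of cases. No deep idea is needed beyond ``forest on $6$ vertices has $\le 5$ edges'' plus the per-vertex $f$-constraint.
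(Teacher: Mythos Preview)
Your plan has real gaps in both parts.

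\textbf{First part.} You compute $|V(G_t')| = 6 + 2(t-2) = 2t+2$ but then never use it: any forest on $2t+2$ vertices has at most $2t+1$ edges, and that is the whole argument. No $f$-constraint or degree-sum bookkeeping is needed. (Incidentally, the four non-$u,v$ cycle vertices have degree~$2$ in $G_t'$, not~$3$, so your degree-sum calculation is off from the start.)

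\textbf{Second part.} Your edge labels are wrong: in the paper $e_7$ is the chord $uv$, while $e_1, e_3$ are cycle edges incident to $u$ and $v$ on the \emph{same} side of the $6$-cycle, with $e_2$ the middle edge between them; $e_4, e_5, e_6$ lie on the other side with $e_5$ in the middle. More seriously, your ``save~$3$'' claim is false. Assuming $e_1, e_3, e_7 \in F$, acyclicity forces only $e_2 \notin F$; $F$ can still contain two of $e_4, e_5, e_6$ and remain a tree on the six cycle vertices, so it may omit only two of the seven edges there. Feeding this into your degree-sum/forest-on-six-vertices bound gives only $|F| \le 2t+1$, not $2t$.

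The paper avoids this by partitioning edges according to incidence with $u$ and $v$, rather than bounding the cycle-plus-chord subgraph on its own: with $e_2$ excluded, the only edge not incident to $u$ or $v$ that can appear is $e_5$; the degree-$t$ constraint at $u$ allows at most $t-2$ edges besides $e_1, e_7$ (this $t-2$ budget already absorbs the remaining cycle edge at $u$ together with all pendants there), and symmetrically at $v$. Hence $|F| \le 3 + 1 + (t-2) + (t-2) = 2t$ in one line. Your approach can be salvaged by a case analysis on which of $e_4, e_5, e_6$ lie in $F$, but it is longer and, as written, rests on a claim that fails.
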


\begin{proof}
The simple graph $G_t'$ has $6 + 2(t - 2) = 2t + 2$ vertices, so every degree-$t$ forest in $G_t'$ has at most $2t + 1$ edges. Suppose that degree-$t$ forest $F$ of $G_t'$ contains $e_1, e_3, e_7$. Then $F$ does not contain $e_2$, but it may contain $e_5$. Besides $e_1$ and $e_7$, $F$ contains at most $t - 2$ additional edges incident to $u$; and besides $e_3$ and $e_7$, $F$ contains at most $t - 2$ additional edges incident to $v$. This implies that $F$ has at most $3 + 1 + 2(t - 2) = 2t$ edges.
\end{proof}

Fix a degree-$t$ forest edge-set $F$ in $G_t'$. If $F$ contains at most two of $e_1, e_3, e_7$, then  by Claim \ref{claim} we have $|F| \le 2t + 1$, so that
\begin{align*}
	\sum_{e \in F} x_e \le 2 \cdot \frac{2}{2t + 3} + (|F| - 2) \cdot \frac{1}{2t + 3} \le 1.
\end{align*}
If $F$ contains each of $e_1, e_3, e_7$, then by Claim \ref{claim} we have $|F| \le 2t$, so that
\begin{align*}
	\sum_{e \in F} x_e = 3 \cdot \frac{2}{2t + 3} + (|F| - 3) \cdot \frac{1}{2t + 3} \le 1.
\end{align*}
Since also $x \ge 0$, this proves that $x$ is feasible for $(D)$.

We calculate objective value of $x$ in $(D)$ to be $(4t + 7)/(2t + 3)$. Thus, by weak duality and the equivalence of $(P)$ and $(P_0)$, we have that $a_t^\ast(G_t) \ge (4t + 7)/(2t + 3)$. (One could prove that $x$ is in fact an optimal solution for $(D)$, by exhibiting a feasible solution $y$ for $(P)$ with the same objective value, but this is not necessary for our proof.) On the other hand, we easily observe that $\Delta_t(G_t) = a(G_t) = 2$. Therefore,
\begin{align*}
	a_t^\ast(G_t) \ge \frac{4t + 7}{2t + 3} = \frac{4t + 7}{4t + 6} \max\{\Delta_t(G_t), a(G_t) \},
\end{align*}
which achieves inequality (\ref{goal-equality}) above and thus proves Theorem \ref{degree-f-arboricity-large} with $c_t \ge \frac{4t + 7}{4t + 6}$. (Note that $\Delta_t(mG_t) = a(mG_t) = 2m$ for every $m \ge 1$, so Theorem \ref{degree-f-arboricity-large} applies to all even $d \ge 2$.)

\begin{figure}
	\centering
	\includegraphics[scale=0.8]{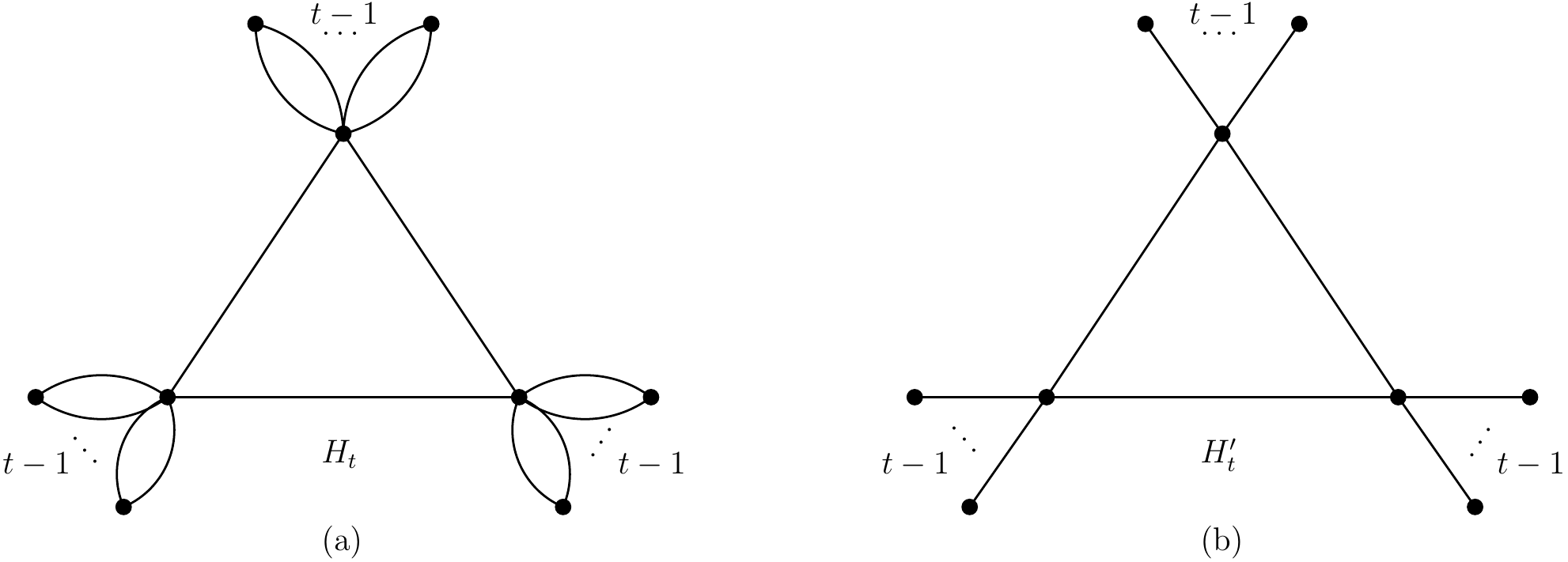}
	\caption{}
	\label{counterexample2}
\end{figure}

For $t \le 4$ we can improve this bound on $c_t$ by replacing $G_t$ with the multigraph $H_t$ shown in Figure \ref{counterexample2}(a). The underlying simple graph $H_t'$ of $H_t$ is shown in Figure \ref{counterexample2}(b). One can show that a feasible solution for $(D)$ with respect to $H_t'$ is $x_e = 1/(3t - 2)$ for every edge $e$, and that this $x$ has objective value $(6t - 3)/(3t - 2)$. Thus $a_t^\ast(H_t) \ge (6t - 3)/(3t - 2)$ while $\Delta_t(H_t) = a(H_t) = 2$, and this gives the bound $c_t \ge (6t - 3)/(6t - 4)$. This bound improves the one above for $t \le 4$, giving $c_2 \ge 9/8$, $c_3 \ge 15/14$, and $c_4 \ge 21/20$.

The idea for constructing the multigraphs $G_t$ and $H_t$ above is to start with a suitable base graph and then to add certain gadgets to a subset of the vertices. These gadgets have the form of a number of parallel edges connecting to a new vertex. If the base graph has large fractional linear arboricity, then by adding a certain number of gadgets we will create a graph with large fractional degree-$t$ arboricity, for any given $t \ge 2$. One can use this approach of adding gadgets to show that it is NP-complete to decide whether a given multigraph $G$ has degree-$f$ arboricity $a_f(G) = 2$, for any fixed function $f : V(G) \rightarrow \mathbb{Z}_{\ge 2}$, starting with the base case $f = 2$ due to P\'eroche \cite{Pe}. Details are found in \cite{Wd1}. 

We observed the inequalities $a_f(G) \ge a_f^\ast(G) \ge \max \{\Delta_f(G), a(G) \}$ and showed that $a_f^\ast(G)$ and $\max \{\Delta_f(G), a(G) \}$ can be arbitrarily far apart. We leave open the question of whether $a_f(G)$ and $a_f^\ast(G)$ can be arbitrarily far apart.

\section{A directed version} \label{directed-version}
We have proven that Conjecture \ref{degree-f-arboricity-1} is false for general multigraphs, but we believe the conjecture holds when we restrict to simple graphs $G$. In Section \ref{asymptotics}, we will prove Theorem \ref{large-girth} and Theorem \ref{asymptotic-undirected} supporting this conjecture. To make it easier to prove these theorems, in this section we introduce an analogue of degree-$f$ arboricity for directed multigraphs and formulate a directed version of Conjecture \ref{degree-f-arboricity-1}. This reformulated conjecture (Conjecture \ref{degree-f-branchings}) is a generalization of the Directed Linear Arboricity Conjecture (Conjecture \ref{directed-linear-arboricity}) due to Nakayama and P\'eroche \cite{NaPe}. 

For a directed multigraph $D$, let $\Delta^-(D)$ denote the maximum indegree $d^-(v)$ among vertices $v$ of $D$, let $\Delta^+(D)$ denote the maximum outdegree, and let $\overline{D}$ denote the underlying undirected multigraph. Two arcs are said to be \textit{parallel} if they have the same head and tail, and \textit{anti-parallel} if the head of one is the tail of the other and vice versa. The \textit{arc-multiplicity} $\mu(D)$ is the maximum number of parallel arcs in $D$. We will call $D$ a \textit{directed graph} if $\mu(D) = 1$, that is, if $D$ has no parallel arcs (but it may have anti-parallel arcs).

A directed graph $B$ is a \textit{branching} if every vertex $v$ of $B$ has indegree $d_B^-(v) \le 1$ and its underlying undirected graph $\overline{B}$ is a forest. An \textit{arborescence} is a branching whose underyling undirected graph is a tree. For a directed multigraph $D$, the \textit{directed arboricity} $\vec{a}(D)$ of $D$ is the minimum number of colors required to color the arcs of $D$ so that every color class is a branching. Observe that $\vec{a}(D) \ge \Delta^-(D)$ and $\vec{a}(D) \ge a(\overline{D})$. Using Edmonds' celebrated theorem on packing arborescences \cite{Ed1, Lo}, Frank \cite{Fr} proved the following formula for directed arboricity $\vec{a}(D)$.

\begin{thm} \label{branchings}
For every directed multigraph $D$, we have
\begin{align*}
	\vec{a}(D) = \max \left\{\Delta^-(D), a(\overline{D}) \right\}.
\end{align*}
\end{thm}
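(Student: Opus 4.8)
The plan is to prove the two inequalities separately, the lower bound being immediate and the upper bound reducing to Edmonds' theorem on packing arborescences. For the lower bound $\vec{a}(D) \ge \max\{\Delta^-(D), a(\overline{D})\}$: in any coloring of $D$ into branchings, at most one arc of each color can enter a fixed vertex $v$, so at least $d^-(v)$ colors are needed, giving $\vec{a}(D) \ge \Delta^-(D)$; and since every color class induces a forest in $\overline{D}$, the colors partition $\overline{D}$ into forests, giving $\vec{a}(D) \ge a(\overline{D})$. (These are exactly the two observations already stated before the theorem.)

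For the upper bound, set $k = \max\{\Delta^-(D), a(\overline{D})\}$, where we may assume $k \ge 1$ since $k = 0$ forces $D$ to have no arcs. First I would build an auxiliary directed multigraph $D'$ by adjoining a new root vertex $r$ and, for each $v \in V(D)$, adding $k - d^-_D(v) \ge 0$ parallel arcs from $r$ to $v$. Then every vertex of $D$ has indegree exactly $k$ in $D'$, and counting arcs gives $|E(D')| = |E(D)| + \sum_{v \in V(D)}(k - d^-_D(v)) = k|V(D)|$.

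The heart of the argument is verifying the hypothesis of Edmonds' disjoint arborescences theorem \cite{Ed1, Lo}, namely that $\rho_{D'}(S) \ge k$ for every nonempty $S \subseteq V(D)$, where $\rho_{D'}(S)$ denotes the number of arcs of $D'$ entering $S$. Writing $e_D(S)$ for the number of arcs of $D$ with both endpoints in $S$, one has the identity $\sum_{v \in S} d^-_D(v) = \rho_D(S) + e_D(S)$ (each arc with head in $S$ is counted once, according to whether its tail lies outside or inside $S$), and hence $\rho_{D'}(S) = \rho_D(S) + \sum_{v \in S}(k - d^-_D(v)) = k|S| - e_D(S)$. For $|S| = 1$ this equals $k$; for $|S| \ge 2$ it is at least $k$ because $e_D(S) = e_{\overline{D}}(S) \le k(|S|-1)$ by Nash-Williams' formula together with $a(\overline{D}) \le k$. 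Edmonds' theorem then produces $k$ arc-disjoint spanning arborescences of $D'$ rooted at $r$; since each has exactly $|V(D)|$ arcs and $|E(D')| = k|V(D)|$, these $k$ arborescences are forced to partition $E(D')$. Deleting from each arborescence the arcs incident to $r$ (i.e., the arcs not present in $D$) yields $k$ branchings of $D$ partitioning $E(D)$, so $\vec{a}(D) \le k$.

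The step I expect to be the main obstacle is purely bookkeeping rather than conceptual: getting the identity $\rho_{D'}(S) = k|S| - e_D(S)$ right by carefully separating, in the expansion of $\sum_{v \in S} d^-_D(v)$, the arcs entering $S$ from outside from those lying inside $S$; and then confirming via the edge count $|E(D')| = k|V(D)|$ that the packed arborescences must be a genuine partition of $E(D')$, not merely arc-disjoint. Everything else is a direct invocation of Edmonds' theorem and of Nash-Williams' arboricity formula.
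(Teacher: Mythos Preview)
The paper does not supply its own proof of this theorem; it simply attributes the result to Frank \cite{Fr} and notes that Frank's argument rests on Edmonds' arborescence packing theorem \cite{Ed1, Lo}. Your proposal is a correct and complete reconstruction of precisely that argument: add a root $r$ with enough parallel arcs to equalize all indegrees to $k$, verify Edmonds' cut condition $\rho_{D'}(S)\ge k$ via the identity $\rho_{D'}(S)=k|S|-e_D(S)$ together with Nash-Williams' bound $e_{\overline{D}}(S)\le a(\overline{D})(|S|-1)$, conclude that the $k$ arc-disjoint spanning arborescences partition $E(D')$ by the edge count, and then delete the auxiliary arcs. There is nothing further to compare, since this is exactly the route the paper points to.
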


Frank also showed via Nash-Williams' Theorem \cite{Na} that every directed multigraph $D$ satisfies $a(\overline{D}) \le \Delta^-(D) + \mu(D)$, so Theorem \ref{branchings} implies that $\vec{a}(D) \le \Delta^-(D) + \mu(D)$. In particular, if $D$ is a directed graph then
\begin{align*}
	\Delta^-(D) \le \vec{a}(D) \le \Delta^-(D) + 1,
\end{align*}
a result also noted by Kareyan \cite{Ka}.

Theorem \ref{branchings} has some resemblance to Conjecture \ref{degree-f-arboricity-1}. We introduce an $f$-coloring version of branchings to connect the statements rigorously. Let $D$ be a directed multigraph, and let $f : V(D) \rightarrow \mathbb{Z}_{\ge 2}$ be a function. A directed subgraph $B$ of $D$ is a \textit{degree-$f$ branching} if it is a branching where every vertex $v$ has outdegree $d_B^+(v) \le f(v) - 1$. Notice that the underlying undirected graph $\overline{B}$ of a degree-$f$ branching $B$ is a degree-$f$ forest. The \textit{directed degree-$f$ arboricity} $\vec{a}_f(D)$ of directed multigraph $D$ is the minimum number of colors required to color the arcs of $D$ so that every color class is a degree-$f$ branching. 

For the case $f = 2$, a degree-$2$ branching is also called as a \textit{directed linear forest}, and the directed degree-$2$ arboricity $\vec{a}_2(D) = \vec{la}(D)$ is called the \textit{directed linear arboricity} of $D$. Noting that $\vec{la}(D) \ge \Delta^-(D)$ and $\vec{la}(D) \ge \Delta^+(D)$ for every directed multigraph $D$, Nakayama and P\'eroche \cite{NaPe} formulated the \textit{Directed Linear Arboricity Conjecture} for directed graphs $D$:
\begin{align*}
	\vec{la}(D) \le \max \left\{ \Delta^-(D), \Delta^+(D) \right\} + 1.
\end{align*}
Nakayama and P\'eroche proved that this conjecture holds if $D$ is acyclic, if $\Delta^-(D), \Delta^+(D) \le 2$ and $|V(D)| \ge 4$, and if $D$ is one of certain symmetric directed graphs $G^\ast$. (Here, $G^\ast$ is obtained from the undirected graph $G$ by replacing each edge by a pair of anti-parallel arcs.) However, He, Li, Bai, and Sun \cite{HeLiBaSu} later showed that their conjecture does not hold for the complete symmetric directed graphs $K_3^\ast$ and $K_5^\ast$: $\Delta^-(K_n^\ast) = \Delta^+(K_n^\ast) = n - 1$ for all $n$ but $\vec{la}(K_n^\ast) = n + 1$ for $n \in \{3, 5\}$. Still, they believe that these two directed graphs are the only counterexamples, leading to the following updated version of Nakayama and P\'eroche's conjecture.

\begin{conj}[Directed Linear Arboricity Conjecture] \label{directed-linear-arboricity}
	For every directed graph $D$, we have
	\begin{align*}
	\vec{la}(D) \le \max \left\{ \Delta^-(D), \Delta^+(D) \right\} + 1,
	\end{align*}
	except for $D = K_3^\ast, K_5^\ast$, in which case $\vec{la}(D) = \max \left\{ \Delta^-(D), \Delta^+(D) \right\} + 2$.
\end{conj}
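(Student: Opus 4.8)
\smallskip
\noindent\emph{Outline of a possible approach.}
One plausible route to the Directed Linear Arboricity Conjecture runs as follows; a complete proof is presumably out of reach, since every simple graph $G$ has an orientation in which all in- and outdegrees are at most $\lceil \Delta(G)/2 \rceil$, so by forgetting arc directions the conjecture would already imply the Linear Arboricity Conjecture (Conjecture \ref{linear-arboricity}) for graphs of even maximum degree. The realistic plan is to prove it in the two regimes where the undirected statement is understood --- large girth and large maximum degree --- after reducing the general case to a diregular one, and to dispose of the small cases together with the two exceptions by the ad hoc and computer-assisted methods already in the literature.

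The first step is to reduce to the case where $D$ is \emph{$k$-diregular}, i.e.\ $d^-(v) = d^+(v) = k$ for every vertex $v$, where $k = \max\{\Delta^-(D), \Delta^+(D)\}$: given an arbitrary directed graph with in- and outdegrees at most $k$, one adds new vertices and new arcs so as to raise every degree to exactly $k$ while keeping the digraph simple and avoiding the exceptional configurations, and any decomposition of the enlargement into directed linear forests restricts to one of $D$ with the same number of classes. Carrying out the padding without being forced to create parallel arcs --- which is delicate when $D$ already contains many anti-parallel pairs --- is a first technical point.

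For connected $k$-diregular $D$ there is the classical decomposition into $k$ arc-disjoint \emph{directed $1$-factors}: the bipartite graph on $V^{+} \cup V^{-}$ having an edge $u^{+}v^{-}$ for each arc $uv$ of $D$ is $k$-regular, hence splits into $k$ perfect matchings by K\"onig's edge-colouring theorem, and each matching pulls back to a spanning disjoint union of directed cycles in $D$. Deleting one arc from every such cycle turns each $1$-factor into a directed linear forest and leaves a residual arc set $R$. The crux is that $R$ need not be small, so it cannot simply be absorbed into a single extra color; instead one must choose the deleted arcs so carefully that the arcs of $R$ can be merged back into the $k$ forests without pushing any indegree above $1$ or any outdegree above $1$ --- a simultaneous parity and reachability condition along the union of the $1$-factors. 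This redistribution is the step I expect to be the main obstacle, and precisely where $K_3^\ast$ and $K_5^\ast$ should surface as genuine exceptions that force an additional color.

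For $k$ larger than a small constant one can sidestep the residual analysis entirely and argue probabilistically, exactly as this paper does in Section \ref{asymptotics} for the more general degree-$f$ setting: randomly partition the vertex set into blocks so that within each block and between consecutive blocks all relevant in- and outdegrees become small, apply the Lov\'asz Local Lemma to color each piece into directed linear forests with few colors, and glue the pieces together, obtaining $\vec{la}(D) \le k + o(k)$ --- the asymptotic form of the conjecture, and the $f = 2$ directed instance of the arguments behind Theorems \ref{large-girth} and \ref{asymptotic-undirected}. Sharpening the $o(k)$ error down to the exact value $1$ for all large $k$ is the hard part and seems no easier than the corresponding undirected question; for the remaining small values of $k$ the conjecture --- and the value $k+2$ for $K_3^\ast$ and $K_5^\ast$ --- would be confirmed by the methods of Nakayama and P\'eroche and of He, Li, Bai, and Sun.
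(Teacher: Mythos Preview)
The statement under consideration is a \emph{conjecture}, and the paper does not attempt to prove it; it is presented as an open problem (the updated Directed Linear Arboricity Conjecture of Nakayama--P\'eroche, amended by He, Li, Bai, and Sun). You correctly recognize this at the outset, noting that a full proof would already imply the Linear Arboricity Conjecture for graphs of even maximum degree and is therefore out of reach. So there is no ``paper's own proof'' to compare against, and your submission is not a proof but an informed discussion of the problem's status---which is the appropriate response here.

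Your outline is sensible and tracks what the paper actually establishes: the reduction to diregular digraphs via the bipartite double cover and K\"onig's theorem is exactly the mechanism behind Theorem~\ref{large-directed-girth}, and the probabilistic vertex-partitioning argument you sketch for large $k$ is precisely the content of Lemma~\ref{random-vertex-coloring} and Theorem~\ref{asymptotic-directed} specialized to $f=2$. One small correction: the paper does \emph{not} reduce to the diregular case for general $f$ (and explicitly remarks that it cannot), so your first step is specific to $f=2$ and would not extend to the broader Conjecture~\ref{degree-f-branchings}. Also, the ``redistribution of residual arcs'' step you flag as the main obstacle is indeed where the difficulty lies, but the paper offers no new insight there---it simply records the conjecture and moves on to the partial results. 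In short: your assessment of what is known, what is hard, and where the exceptions $K_3^\ast, K_5^\ast$ arise is accurate and matches the paper's own framing.
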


Now we consider a generalization of Conjecture \ref{directed-linear-arboricity} for general vertex weight functions $f$ and multigraphs $D$. As before, $\vec{a}_f(D) \ge \Delta^-(D)$ and $\vec{a}_f(D) \ge a(\overline{D})$. Now we also see that $\vec{a}_f(D) \ge \Delta^+_{f-1}(D) = \max_{v \in V(D)} \left\lceil \frac{d^+(v)}{f(v) - 1} \right\rceil$. Based on previously written theorems and conjectures (ignoring Theorem \ref{degree-f-arboricity-large}), it is natural to conjecture the following. 

\begin{conj} \label{degree-f-branchings}
For every directed multigraph $D$, we have
\begin{align*}
	\vec{a}_f(D) \le \max \left\{\Delta^-(D), \Delta_{f-1}^+(D), a(\overline{D}) \right\} + 1.
\end{align*}
\end{conj}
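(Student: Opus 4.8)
The plan is to reduce Conjecture \ref{degree-f-branchings} to the undirected Conjecture \ref{degree-f-arboricity-1} by a standard orientation argument, except that I should be careful: since Conjecture \ref{degree-f-arboricity-1} is \emph{false} for multigraphs, this is only meant to be a conjecture, so really I would present the reduction as a \emph{proposition} showing that the directed version implies (or is implied by) the undirected one in the form relevant for the later probabilistic proofs. Concretely, given a directed multigraph $D$, one colors the arcs ignoring directions via a degree-$f$ forest coloring of $\overline D$, but then each color class, though a degree-$f$ forest, need not be a branching because vertices may have indegree larger than $1$. The fix is to refine: a degree-$f$ forest $T$ in $\overline D$ with the orientation inherited from $D$ decomposes further by re-coloring so that within each tree of $T$ one chooses a root and orients toward it, or rather splits $T$ so that indegrees become $\le 1$. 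So the first step is the lemma: an oriented forest (a forest whose edges carry $D$'s orientation) with maximum indegree $\le k$ and maximum outdegree $\le \ell$ decomposes into at most $\max\{k,\ell+1\}$-ish branchings that respect the degree-$f$ outdegree bound — but that already costs a factor, so this naive route loses too much.

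A cleaner approach, and the one I would actually carry out, is to prove the directed statement \emph{directly} by an analogue of Frank's argument, using Edmonds' arborescence-packing theorem as a black box exactly as Theorem \ref{branchings} does, but enforcing the extra outdegree constraint $d_B^+(v)\le f(v)-1$ on each branching. First I would set $k = \max\{\Delta^-(D),\Delta^+_{f-1}(D),a(\overline D)\}$. The key step is to show that $D$ can be split into $k+1$ branchings each of which is degree-$f$. I would try the following: form the auxiliary multigraph where one adds a new ``root'' vertex $r$ joined to every $v\in V(D)$ by $k - d^-(v)$ parallel arcs from $r$ to $v$ (so that every non-root vertex has indegree exactly $k$), and then invoke Edmonds' theorem to decompose the arc set into $k$ arc-disjoint spanning arborescences rooted at $r$ — this needs the connectivity hypothesis, which is where $a(\overline D)\le k$ must be leveraged via Nash-Williams together with a padding/parallel-arc trick so that the root-connectivity condition $\lambda(r,v)\ge k$ holds. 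Deleting $r$ from each arborescence yields $k$ branchings covering $D$; the leftover is controlled to give the ``$+1$''. The delicate new ingredient is the outdegree bound: I would need to route the extra root-arcs and choose the Edmonds decomposition so that in each of the $k$ branchings every vertex $v$ sends out at most $f(v)-1$ arcs of $D$. Since $v$ has outdegree $d^+(v)\le (f(v)-1)\Delta^+_{f-1}(D)\le (f(v)-1)k$ in $D$, on average $v$ sends $\le f(v)-1$ arcs per branching, but one needs this to hold for \emph{every} branching simultaneously, which is a matroid-intersection / submodular-flow style refinement of Edmonds' theorem rather than the bare statement.

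Consequently the main obstacle is exactly this last point: upgrading Edmonds' arborescence packing to one that additionally respects per-color outdegree caps at each vertex. I would handle it by phrasing the whole decomposition as a problem in the matroid union / submodular flow framework: the family of ``$k$ disjoint spanning arborescences rooted at $r$'' forms the common bases of two matroids (a partition matroid on the arcs entering each vertex and a graphic-matroid-union structure), and the outdegree constraints $\sum_{\text{arcs out of }v\text{ in color }i} 1 \le f(v)-1$ are themselves partition-matroid constraints on the other endpoint; intersecting these is where a clean theorem may fail and an $\epsilon$-loss or a parity obstruction (as with $K_3^\ast,K_5^\ast$ in Conjecture \ref{directed-linear-arboricity}) could appear. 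If a fully tight argument resists, the fallback — sufficient for the paper's later sections — is to prove the weaker bound $\vec a_f(D) \le \max\{\Delta^-(D),\Delta^+_{f-1}(D), a(\overline D)\} + O(1)$ unconditionally by first decomposing $\overline D$ into $a(\overline D)$ forests (Nash-Williams), orienting each via $D$, and then splitting each oriented forest into $O(1)$ degree-$f$ branchings using the structure of forests, and to leave the sharp ``$+1$'' as the stated conjecture, which is consistent with the excerpt presenting Conjecture \ref{degree-f-branchings} as a conjecture rather than a theorem.
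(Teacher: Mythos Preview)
You correctly note at the outset that Conjecture~\ref{degree-f-arboricity-1} is false for multigraphs, but you do not carry this observation to its conclusion: the paper explicitly states, right after posing Conjecture~\ref{degree-f-branchings}, that it is \emph{also false} for general directed multigraphs. The reduction (via Corollary~\ref{orientation-corollary}) shows that Conjecture~\ref{degree-f-branchings} would give $a_f(G) \le \max\{\Delta_f(G), a(G)\} + 1$ for every undirected multigraph $G$, and Theorem~\ref{degree-f-arboricity-large} rules this out by a multiplicative factor, not merely an additive one. Hence your direct Edmonds-packing approach cannot possibly succeed: the ``matroid-intersection / submodular-flow refinement'' you hope will enforce per-color outdegree caps does not exist in this generality, because the conclusion it would yield is false. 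There is no proof in the paper to compare against; the paper only states the conjecture, records that it fails for directed multigraphs, suggests it may still hold for directed graphs with no parallel arcs, and then establishes it in the large-girth and asymptotic regimes for that restricted class.

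Your fallback is also broken on its own terms. Decomposing $\overline D$ into $a(\overline D)$ forests and then splitting each oriented forest into $O(1)$ degree-$f$ branchings is impossible in general: a forest $F$ with the orientation inherited from $D$ can have a vertex of indegree as large as $\Delta^-(D)$, and any decomposition of $F$ into branchings needs at least that many parts. So the per-forest cost is not $O(1)$; and even if it were, summing over the $a(\overline D)$ forests would give a bound of order $a(\overline D)$, not $\max\{\Delta^-(D),\Delta^+_{f-1}(D),a(\overline D)\}+O(1)$.
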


We will show that Conjecture \ref{degree-f-branchings} nearly implies Conjecture \ref{degree-f-arboricity-1}. Because Conjecture \ref{degree-f-arboricity-1} is false for general multigraphs as we have shown, so is Conjecture \ref{degree-f-branchings}. However, it could still be true for directed graphs (with no parallel arcs). Since a directed graph $D$ satisfies $\Delta^-(D) \le a(\overline{D}) \le \Delta^-(D) + 1$ as noted above, Conjecture \ref{degree-f-branchings} for directed graphs is basically the statement that $D$ satisfies
\begin{align*}
	\vec{a}_f(D) \le \max \left\{ \Delta^-(D)+1, \Delta_{f-1}^+(D) \right\} + 1.
\end{align*}

In the case $f = 2$, it is an easy observation that the Directed Linear Arboricity Conjecture (Conjecture \ref{directed-linear-arboricity}) nearly implies the Linear Arboricity Conjecture (Conjecture \ref{linear-arboricity}): Given an undirected simple graph $G$, let $D$ be a balanced orientation of $G$, meaning that every vertex $v$ has both indegree and outdegree at most $\lceil d_G(v)/2 \rceil$ in $D$. (The existence of balanced orientations is an easy exercise from the theory of Euler tours or network flows, and it is also a special case of Theorem \ref{orientation-theorem} below.) Then $\Delta^-(D), \Delta^+(D) \le \lceil \Delta(G)/2 \rceil$, and so by Conjecture \ref{directed-linear-arboricity} (for $D \neq K_3^\ast, K_5^\ast$) we have $la(G) \le \vec{la}(D) \le \max \{\Delta^-(D), \Delta^+(D)\} + 1 \le \lceil \Delta(G)/2 \rceil + 1$, which is almost the Linear Arboricity Conjecture. To make this kind of reduction work for more general functions $f$, we will replace balanced orientations with the following more general orientation theorem due to Entringer and Tolman \cite{EnTo} (see also \cite{Wd}).

\begin{thm}[Entringer-Tolman] \label{orientation-theorem}
Given a multigraph $G$ (possibly with loops) and functions $g, h : V(G) \rightarrow \mathbb{Z}_{\ge 0}$, $G$ can be oriented so that every vertex $v \in V(G)$ has indegree $d^-(v) \le g(v)$ and outdegree $d^+(v) \le h(v)$ if and only if
\begin{enumerate}[label={(\arabic*)}]
	\item $d(v) \le g(v) + h(v)$ for all $v \in V(G)$, and
	\item $e(S) \le \min \{ g(S), h(S) \}$ for all $S \subseteq V(G)$.
\end{enumerate}
\end{thm}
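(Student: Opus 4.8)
The plan is to treat the two directions quite differently. Necessity is a short double‑counting argument. For sufficiency I would run a local search over orientations, using path reversals as the elementary move and a \emph{two‑level} potential function to control it. (An alternative route for sufficiency is to model an orientation as a choice of head for each edge and invoke a feasible‑flow theorem such as Hoffman's circulation theorem or Gale's feasible‑flow theorem; the resulting cut condition then has to be massaged into (1) and (2). Since that massaging is essentially the same computation, I prefer the self‑contained combinatorial argument, which also does not invoke any of the earlier results in the paper.)

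\textbf{Necessity.} Given an orientation with $d^-(v)\le g(v)$ and $d^+(v)\le h(v)$ for all $v$, we get $d(v)=d^-(v)+d^+(v)\le g(v)+h(v)$, which is (1); and for $S\subseteq V(G)$ every edge with both endpoints in $S$ receives a head in $S$, so these edges are among those counted by $\sum_{v\in S}d^-(v)$, giving $e(S)\le\sum_{v\in S}d^-(v)\le g(S)$, and counting tails instead gives $e(S)\le h(S)$, which is (2). For sufficiency, assume (1) and (2), and among the finitely many orientations of $G$ pick one that first minimizes $\Psi_1:=\sum_v\max\{d^+(v)-h(v),0\}$ and, subject to that, minimizes $\Psi_2:=\sum_v\max\{d^-(v)-g(v),0\}$; the goal is to show $\Psi_1=\Psi_2=0$, which is exactly the desired orientation. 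Two facts drive everything: reversing a directed path $u_0\to\cdots\to u_k$ decreases $d^+(u_0)$ and $d^-(u_k)$ by $1$, increases $d^-(u_0)$ and $d^+(u_k)$ by $1$, and changes no other degree; and if a vertex set $X$ is closed under taking out‑neighbours (resp.\ in‑neighbours) then no arc leaves (resp.\ enters) $X$, so $\sum_{v\in X}d^+(v)=e(X)$ (resp.\ $\sum_{v\in X}d^-(v)=e(X)$).

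\textbf{The two steps.} To see $\Psi_1=0$: otherwise fix $v_0$ with $d^+(v_0)>h(v_0)$ and let $X$ be the vertices reachable from $v_0$ by directed paths; if some $w\in X$ had $d^+(w)<h(w)$, reversing a simple $v_0$–$w$ path would lower $\Psi_1$ (the term at $v_0$ drops by one, the term at $w$ stays $0$, others fixed), contradicting minimality, so $d^+(w)\ge h(w)$ for all $w\in X$, whence $e(X)=\sum_{w\in X}d^+(w)\ge h(X)+1$ (the extra $+1$ from $v_0$), contradicting $e(X)\le h(X)$ from (2). Now assuming $d^+\le h$ everywhere, to see $\Psi_2=0$: otherwise fix $v_0$ with $d^-(v_0)>g(v_0)$; here hypothesis (1) is exactly what is needed, since $d^+(v_0)=d(v_0)-d^-(v_0)\le d(v_0)-g(v_0)-1\le h(v_0)-1$, so $v_0$ has a spare out‑slot. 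Let $X$ be the vertices that can reach $v_0$; if some $u_0\in X$ had $d^-(u_0)<g(u_0)$, reversing a simple $u_0$–$v_0$ path keeps $\Psi_1=0$ (the only out‑degree that grows, at $v_0$, goes up to $\le h(v_0)$, and $d^+(u_0)$ only drops) while lowering $\Psi_2$ (the term at $v_0$ drops by one, the term at $u_0$ stays $0$), contradicting the lexicographic choice; so $d^-(u)\ge g(u)$ for all $u\in X$, whence $e(X)=\sum_{u\in X}d^-(u)\ge g(X)+1$, contradicting $e(X)\le g(X)$.

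\textbf{Main obstacle.} The delicate point is enforcing the two degree bounds \emph{simultaneously}: a naive one‑constraint‑at‑a‑time repair risks the $d^-$‑fixing moves of the second phase violating the $d^+\le h$ bounds re‑established in the first. The lexicographic potential $(\Psi_1,\Psi_2)$, together with the observation that in the second phase every path reversal stays within the out‑degree budget — which is precisely where hypothesis (1) is spent — is what reconciles the two phases; hypothesis (2) is used in both phases exactly to rule out a "stuck" reachable set with no slack vertex. Loops require only a remark: an oriented loop adds $1$ to both $d^+$ and $d^-$ of its vertex and is counted once in $e(S)$, so all the identities above hold verbatim.
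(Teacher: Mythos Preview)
The paper does not actually prove this theorem: it is quoted from Entringer and Tolman \cite{EnTo} (with a pointer to \cite{Wd}) and used as a black box to derive Corollary~\ref{orientation-corollary}. So there is no ``paper's own proof'' to compare against.

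Your argument is correct and self-contained. Necessity is exactly the standard double count. For sufficiency, your lexicographic potential $(\Psi_1,\Psi_2)$ with path reversals is a clean way to handle the simultaneous constraints, and you have identified precisely where each hypothesis is spent: condition~(2) rules out a reachable (resp.\ co-reachable) set with no slack vertex in each phase, and condition~(1) is used once, to guarantee that the vertex $v_0$ with in-excess has $d^+(v_0)\le h(v_0)-1$, so the second-phase reversal does not reintroduce out-excess. The checks that internal vertices of a simple directed path keep their degrees, that $X$ being closed under out-neighbours forces $\sum_{w\in X}d^+(w)=e(X)$, and the handling of loops are all fine. The flow-based alternative you mention (choose a head for each edge, apply Hoffman/Gale, and reduce the cut condition to (1)--(2)) is indeed the other standard route; your combinatorial proof has the advantage of being elementary and of making transparent exactly which hypothesis blocks which obstruction.
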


\begin{cor} \label{orientation-corollary}
For every multigraph $G$ and function $f : V(G) \rightarrow \mathbb{Z}_{\ge 2}$, if $\max \{\Delta_f(G), a(G) \} \le d$ then $G$ has an orientation $D$ such that $\max \{\Delta^-(D), \Delta_{f-1}^+(D), a(\overline{D}) \} \le d$.
\end{cor}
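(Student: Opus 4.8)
The plan is to apply the Entringer--Tolman Theorem (Theorem \ref{orientation-theorem}) with a carefully chosen pair of functions $g, h : V(G) \to \mathbb{Z}_{\ge 0}$ encoding the desired indegree and outdegree bounds. We want the resulting orientation $D$ to satisfy $\Delta^-(D) \le d$, $\Delta_{f-1}^+(D) \le d$, and $a(\overline{D}) \le d$. The last condition is automatic since $\overline{D} = G$ and $a(G) \le d$ by hypothesis, so it plays no role in choosing the orientation. The condition $\Delta_{f-1}^+(D) \le d$ means $d^+(v) \le d(f(v) - 1)$ for every vertex $v$, so I would set $h(v) = d(f(v)-1)$. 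For the indegree, I would set $g(v) = d(v) - h(v) = d(v) - d(f(v)-1)$ if this is nonnegative, and $g(v) = 0$ otherwise; equivalently $g(v) = \max\{d(v) - d(f(v)-1),\, 0\}$. Then any orientation respecting these bounds will have $d^-(v) \le g(v) \le d$ (the inequality $g(v) \le d$ needs a short check using $\Delta_f(G) \le d$), so all three target conditions follow.

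The work then reduces to verifying the two hypotheses of Theorem \ref{orientation-theorem} for this choice of $g, h$. Condition (1), $d(v) \le g(v) + h(v)$, holds by construction: when $d(v) \ge d(f(v)-1)$ we have equality, and otherwise $g(v) + h(v) = h(v) = d(f(v)-1) \ge d(v)$. Condition (2) requires $e(S) \le \min\{g(S), h(S)\}$ for every $S \subseteq V(G)$. For the bound $e(S) \le h(S)$, I would use $h(S) = d\sum_{v \in S}(f(v)-1) \ge d(f(S) - |S|)$ and relate $e(S)$ to $f(S)$; since $a_f(G) \ge \Delta_f(G)$ forces $e(S)$ to be controlled, but more directly one can note $e(S) \le \binom{|S|}{2}\cdot(\text{stuff})$ is too crude — instead the cleaner route is that each color class among the $d$ forests restricted to $S$ has at most $|S| - 1$ edges, giving $e(S) \le d(|S|-1)$ (this is exactly Nash-Williams with $a(G) \le d$), and then check $d(|S|-1) \le h(S)$. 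For the bound $e(S) \le g(S)$, I would split into the vertices where $g(v) = 0$ versus $g(v) > 0$, and again invoke $e(S) \le d(|S|-1)$ together with the definition of $g$; here I expect to need $\Delta_f(G) \le d$ to handle vertices with small $f$-value.

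The main obstacle I anticipate is verifying condition (2), specifically the inequality $e(S) \le g(S)$, because $g$ is defined by a maximum with $0$ and so behaves irregularly — a subset $S$ consisting largely of vertices with $g(v) = 0$ (those with $d(v) \le d(f(v)-1)$, i.e. high-degree-relative-to-$f$ but that still means their full degree is absorbed by outdegree) could in principle have $g(S)$ much smaller than $d(|S|-1)$. The resolution should be that for such a vertex, $d(v) \le d(f(v)-1)$ forces $d(v)$ itself to be small relative to the edge-count constraints, or more precisely that the edges inside $S$ incident to zero-$g$ vertices are few; one packages this by writing $e(S) = \frac{1}{2}\sum_{v\in S} d_{G[S]}(v) \le \frac{1}{2}\sum_{v\in S} d(v)$ and comparing termwise with $g(v) + $ (a fair share). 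I would aim to present both parts of condition (2) as two short lemmas or inline computations, each a one-paragraph estimate, with the $e(S) \le d(|S|-1)$ bound from $a(G) \le d$ doing most of the heavy lifting and the $\Delta_f(G) \le d$ bound cleaning up the low-$f$ vertices.
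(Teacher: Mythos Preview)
Your overall plan---apply Entringer--Tolman with $h(v)=d(f(v)-1)$---is right, but your choice of $g$ is too small, and condition~(2) genuinely fails for it. Take $G=K_3$ with $f\equiv 2$. Then $\Delta_f(G)=1$, $a(G)=2$, so $d=2$ is admissible. Your functions become $h(v)=2$ and $g(v)=\max\{d_G(v)-2,0\}=0$ for every vertex. With $S=V(G)$ we get $e(S)=3$ but $g(S)=0$, so $e(S)\le g(S)$ is violated. More generally, whenever $f\equiv 2$ and $d_G(v)\le d$ for every $v$ (which is automatic if, say, $a(G)=d$ and $\Delta(G)\le d$), your $g$ vanishes identically, and the inequality $e(S)\le g(S)$ cannot hold for any $S$ spanning an edge. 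The heuristic you sketch for rescuing this---splitting $S$ into zero-$g$ and positive-$g$ vertices and bounding $e(S)$ via $\tfrac12\sum d_{G[S]}(v)$---cannot work, since in these examples \emph{all} vertices have $g(v)=0$.

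The fix is to stop trying to make $g$ tight. Simply take $g(v)=d$ for every $v$. Condition~(1) then reads $d_G(v)\le d+d(f(v)-1)=d\cdot f(v)$, which is exactly $\Delta_f(G)\le d$. For condition~(2), $g(S)=d|S|$ and $h(S)=d\sum_{v\in S}(f(v)-1)\ge d|S|$ (since $f\ge 2$), so $\min\{g(S),h(S)\}\ge d|S|$, while $a(G)\le d$ gives $e(S)\le d(|S|-1)<d|S|$. The resulting orientation has $d^-(v)\le d$ and $d^+(v)\le d(f(v)-1)$, hence $\Delta^-(D)\le d$ and $\Delta_{f-1}^+(D)\le d$, and $a(\overline D)=a(G)\le d$ is automatic. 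This is exactly the paper's argument, and it is a two-line verification once you pick the constant $g$.
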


\begin{proof}
Define $g, h : V(G) \rightarrow \mathbb{Z}_{\ge 1}$ by $g(v) = d$ and $h(v) = d(f(v) - 1)$ for all $v \in V(G)$. Then $d_G(v) \le d \cdot f(v) \le g(v) + h(v)$ for all $v \in V(G)$, and $e(S) \le d(|S| - 1) < d|S| = \min \{g(S), h(S)\}$ for all $S \subseteq V(G)$. By Theorem \ref{orientation-theorem}, $G$ has an orientation $D$ such that every vertex $v$ has indegree at most $d$ and outdegree at most $d(f(v) - 1)$. That is, $\Delta^-(D) \le d$ and $\Delta_{f-1}^+(D) \le d$. We also see that $a(\overline{D}) = a(G) \le d$.
\end{proof}

Therefore, for any multigraph $G$, if $d = \max \{\Delta_f(G), a(G) \}$ and $D$ is from Corollary \ref{orientation-corollary}, then Conjecture \ref{degree-f-branchings} implies that
\begin{align*}
	a_f(G) \le \vec{a}_f(D) \le \max \{\Delta^-(D), \Delta_{f-1}^+(D), a(\overline{D}) \} + 1 \le d + 1 = \max \{\Delta_f(G), a(G) \} + 1,
\end{align*}
which is almost Conjecture \ref{degree-f-arboricity-1} as what we wanted to show. This reduction to directed graphs will be used in our proofs of Theorem \ref{large-girth} and Theorem \ref{asymptotic-undirected}.

We remark that Conjecture \ref{degree-f-branchings} can be viewed as a matroid problem, about how closely the covering number $\beta(M_1, M_2, M_3)$ of three matroids $M_1, M_2, M_3$ on ground set $E(D)$ is determined by the covering number $\beta(M_1), \beta(M_2), \beta(M_3)$ of each of these matroids individually. We will not comment further on this perspective.


\section{Large girth and asymptotics} \label{asymptotics}
In this section, we will prove Theorem \ref{large-girth} and Theorem \ref{asymptotic-undirected}. Specifically, we will first prove that Conjecture \ref{degree-f-branchings} on degree-$f$ branchings holds for directed graphs with large directed girth, and then prove that it holds for all directed graphs asymptotically when the function $f$ is bounded. Applying Corollary \ref{orientation-corollary}, these results imply the desired theorems. Recall that a directed graph $D$ is taken to have no parallel arcs, but it may have anti-parallel arcs.

Our proofs are extensions of the probabilistic proofs of Alon \cite{Al0}, who proved such partial results for the Directed Linear Arboricity Conjecture (the case $f = 2$) while improving and simplifying his original arguments in \cite{Al1}. (See also Alon and Spencer \cite{AlSp}.) Alon reduced to and wrote his proofs specifically for $d$-regular (directed) graphs, but we cannot do the same reduction for $f \neq 2$, so we write our proofs in the general non-regular setting. Recall that Conjecture \ref{degree-f-branchings} for a directed graph $D$ is close to, but not exactly, the inequality
\begin{align*}
	\vec{a}_f(D) \le \max \left\{ \Delta^-(D) + 1, \Delta_{f-1}^+(D) \right\} + 1.
\end{align*} 

\subsection{Large girth}
First we prove that Conjecture \ref{degree-f-branchings} holds for directed graphs with large directed girth. Here, the \textit{directed girth} is the length of a shortest directed cycle. We will need Hakimi and Kariv's \cite{HaKa} $f$-coloring generalization of K\"onig's edge-coloring theorem, stated as follows.

\begin{thm} \label{Konig-f}
For every bipartite multigraph $G$ and function $f : V(G) \rightarrow \mathbb{Z}_{\ge 1}$, we have $\chi_f'(G) = \Delta_f(G)$.
\end{thm}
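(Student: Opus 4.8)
The lower bound $\chi_f'(G) \ge \Delta_f(G)$ needs no work: at any vertex $v$ each color class uses at most $f(v)$ of the $d(v)$ incident edges, so at least $\lceil d(v)/f(v)\rceil$ colors appear at $v$, and taking the maximum over $v$ gives the bound. So the whole task is the upper bound $\chi_f'(G) \le \Delta_f(G)$, and my plan is to deduce it from the classical König edge-coloring theorem — $\chi'(H) = \Delta(H)$ for every bipartite multigraph $H$, which is exactly the $f \equiv 1$ special case — by a vertex-splitting reduction.

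First I would set $N = \Delta_f(G)$ (we may assume $E(G) \neq \emptyset$, so $N \ge 1$) and build an auxiliary bipartite multigraph $G'$: replace each vertex $v$ by $f(v)$ copies $v^{(1)}, \dots, v^{(f(v))}$, and distribute the $d_G(v)$ edges at $v$ among these copies as evenly as possible, so that each copy is assigned at most $\lceil d_G(v)/f(v)\rceil$ of them. Each edge $uv$ of $G$ then becomes one edge of $G'$ joining the copy of $u$ it was assigned to and the copy of $v$ it was assigned to, which gives a bijection $E(G) \leftrightarrow E(G')$. The points to check here are that $G'$ is still bipartite — keep all copies of a vertex on its original side — and that $\Delta(G') \le N$, which holds because each copy $v^{(j)}$ has degree at most $\lceil d_G(v)/f(v)\rceil \le \Delta_f(G) = N$.

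Next I would apply König's theorem to $G'$ to obtain a proper edge-coloring with $\Delta(G') \le N$ colors, and transport it back along the bijection to a coloring $c$ of $E(G)$ using at most $N$ colors. The final step is to verify that $c$ is an $f$-coloring: fixing a vertex $v$ and a color $\alpha$, each of the $f(v)$ copies of $v$ is incident to at most one $\alpha$-colored edge in $G'$ by properness, so at most $f(v)$ edges of $G$ incident to $v$ receive color $\alpha$; hence every color class of $c$ is a degree-$f$ subgraph and $\chi_f'(G) \le N$. Together with the lower bound this yields $\chi_f'(G) = \Delta_f(G)$.

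I do not expect a serious obstacle here — the statement is essentially a repackaging of König's theorem (indeed this is the Hakimi–Kariv argument). The only genuinely fiddly points are making sure the even edge-distributions at the two endpoints of an edge can be chosen consistently — they can, since the choice of copy at $u$ and the choice at $v$ are made independently and only jointly pin down which copy-pair an edge lands on — and that bipartiteness survives the splitting. No extremal or probabilistic input enters; everything reduces structurally to the $f \equiv 1$ case.
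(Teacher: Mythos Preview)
Your argument is correct. The vertex-splitting reduction to the classical K\"onig theorem works exactly as you describe: bipartiteness is preserved by keeping all copies on the original side, the independent choices at the two endpoints of an edge cause no conflict, and the pullback of a proper coloring of $G'$ is an $f$-coloring of $G$ because each of the $f(v)$ copies of $v$ carries at most one edge of any given color.

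As for comparison with the paper: there is nothing to compare, because the paper does not prove Theorem~\ref{Konig-f}. It is quoted as a known result of Hakimi and Kariv \cite{HaKa} and used as a black box in the proof of Theorem~\ref{large-directed-girth}. Your proof is the standard one (and, as you note, essentially the Hakimi--Kariv argument), so supplying it here is entirely appropriate.
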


We will also need a result on independent transversals in graphs. Given a simple graph $G$ and a partition $(V_i)_{i \in [k]}$ of its vertex set $V(G)$, an \textit{independent transversal} of $G$ with respect to $(V_i)_{i \in [k]}$ is an independent set of $G$ containing one vertex from each vertex class $V_i$. Aharoni, Alon, and Berger \cite{AhAlBe} proved the following result on independent transversals in line graphs.

\begin{thm} \label{independent-transversals}
Let $G$ be the line graph of a simple graph and let $(V_i)_{i \in [k]}$ be a partition of its vertex set $V(G)$. If $|V_i| \ge \Delta(G)+2$ for every $i$, then $G$ has an independent transversal with respect to $(V_i)_{i \in [k]}$.
\end{thm}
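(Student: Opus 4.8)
The plan is to derive Theorem~\ref{independent-transversals} from the topological Hall theorem of Aharoni and Haxell (in the ``independent system of representatives'' form refined by Aharoni and Berger): a graph $G$ with vertex partition $(V_i)_{i\in[k]}$ admits an independent transversal whenever, for every $I\subseteq[k]$, the independence complex $\mathcal{I}(G[\bigcup_{i\in I}V_i])$ is $(|I|-2)$-connected. Concretely, an independent transversal of $G=L(H)$ with respect to a partition of $E(H)$ into colour classes is precisely a rainbow matching of $H$ using one edge of each colour, so the theorem asserts that colour classes of size $\Delta(L(H))+2$ force such a matching. Thus it suffices to prove the following connectivity estimate: if $L$ is the line graph of a simple graph, then $\mathcal{I}(L)$ is $(\lceil|V(L)|/(\Delta(L)+2)\rceil-2)$-connected. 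Granting this, fix $I\subseteq[k]$ and set $W=\bigcup_{i\in I}V_i$; since $G=L(H)$, the induced subgraph $G[W]$ is again a line graph --- that of the subgraph of $H$ spanned by the edges lying in $W$ --- with $\Delta(G[W])\le\Delta(G)=:\Delta$ and $|W|=\sum_{i\in I}|V_i|\ge|I|(\Delta+2)$. Hence $\mathcal{I}(G[W])$ is $(m-2)$-connected with $m=\lceil|W|/(\Delta(G[W])+2)\rceil\ge|I|$, which is exactly the hypothesis the topological Hall theorem requires.

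The connectivity estimate is where line-graph structure enters, and I would obtain it from two ingredients. The first is a spectral lower bound for the connectivity of independence complexes: for every graph $G$, the complex $\mathcal{I}(G)$ is $(\lceil|V(G)|/(\Delta(G)-\lambda_{\min}(G))\rceil-2)$-connected, where $\lambda_{\min}(G)$ is the least adjacency eigenvalue. This is the Aharoni--Alon--Berger sharpening of the classical bound with denominator $2\Delta(G)$ (recovered by using only $\lambda_{\min}(G)\ge-\Delta(G)$), which is the bound underlying Haxell's $2\Delta$ independent-transversal theorem. The second ingredient is the classical fact that $\lambda_{\min}(L(H))\ge-2$, immediate from the identity $A(L(H))=B^{\top}B-2\,\mathrm{Id}$ for the vertex--edge incidence matrix $B$ of $H$ together with $B^{\top}B\succeq0$. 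Since $\Delta(L)-\lambda_{\min}(L)\le\Delta(L)+2$, combining the two ingredients yields that $\mathcal{I}(L)$ is $(\lceil|V(L)|/(\Delta(L)+2)\rceil-2)$-connected, as needed.

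The main obstacle is the spectral connectivity bound itself. I would prove it by induction on $|V(G)|$ using the standard link--deletion long exact sequence for independence complexes, which gives, for any vertex $v$, that $\mathcal{I}(G)$ is $(c-1)$-connected provided $\mathcal{I}(G\setminus v)$ is $(c-1)$-connected and $\mathcal{I}(G\setminus N[v])$ is $(c-2)$-connected. The delicate point is that a naive induction fails, because deleting one vertex drops $|V(G)|$ by one while the denominator $\Delta(G)-\lambda_{\min}(G)$ need not drop, so one must use the eigenvalue hypothesis to recover the missing slack. Here I would invoke Cauchy eigenvalue interlacing, which ensures $\lambda_{\min}(G\setminus v)\ge\lambda_{\min}(G)$ and $\lambda_{\min}(G\setminus N[v])\ge\lambda_{\min}(G)$ so that the induction hypothesis transfers to the smaller graphs, together with a choice of $v$ whose closed neighbourhood is suitably bounded in terms of $\Delta(G)-\lambda_{\min}(G)$ (for line graphs this last point is readily arranged, since there every closed neighbourhood has at most $\Delta(L(H))+1$ vertices). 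A fallback, should the spectral argument prove unwieldy, is a purely combinatorial Haxell-type argument specialised to line graphs, i.e.\ to rainbow matchings: maintain a maximal partial rainbow matching, build an alternating-tree structure around an uncovered colour class, and exploit that line graphs are claw-free to gain the constant factor that improves the threshold from $2\Delta$ to $\Delta+2$. I nonetheless expect the spectral route to be cleanest, since it confines all use of line-graph structure to the single inequality $\lambda_{\min}(L(H))\ge-2$.
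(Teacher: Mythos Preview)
The paper does not prove this theorem; it is quoted from Aharoni, Alon, and Berger \cite{AhAlBe} and used as a black box. Your plan is essentially a reconstruction of their argument, and the architecture is right: reduce via the topological Hall criterion to a connectivity lower bound for the independence complex of each induced subgraph $G[W]$, observe that $G[W]$ is again a line graph, and supply the bound using $\lambda_{\min}(L(H))\ge -2$.

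The gap is in your inductive proof of the spectral connectivity bound. The link--deletion (Mayer--Vietoris) step you invoke is correct, but it requires \emph{both} that $\mathcal{I}(G\setminus v)$ be $(c-1)$-connected and that $\mathcal{I}(G\setminus N[v])$ be $(c-2)$-connected. Interlacing and the inequality $|N[v]|\le\Delta(G)+1$ handle the second requirement---but note that $|N[v]|\le\Delta(G)+1$ holds in every graph, so your parenthetical ``for line graphs this last point is readily arranged'' is not where line-graph structure enters. Neither ingredient helps with the first requirement: deleting a single vertex can drop $\lceil n/D\rceil$ by one (take $n\equiv 1\pmod D$), and nothing you have written recovers that lost unit. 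The induction simply does not close on the $G\setminus v$ branch. The Aharoni--Alon--Berger argument does not run this naive vertex induction; the connectivity bound is obtained through the Meshulam/Aharoni--Berger--Ziv machinery (edge-explosion rather than single-vertex deletion, governed by a domination-type parameter) and the eigenvalue enters when bounding that parameter. Your fallback---a Haxell-style alternating argument for rainbow matchings---is a reasonable alternative, but claw-freeness alone is not enough to move the threshold from $2\Delta$ to $\Delta+2$; you would still need a quantitative input equivalent to the eigenvalue bound.
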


We now prove our result on directed graphs with large directed girth. The main differences in our proof compared to that of Alon \cite{Al0} for the case $f = 2$ are that we use a different initial arc-coloring of the directed graph, and that we apply Theorem \ref{independent-transversals} to only an induced subgraph of our line graph rather than the entire line graph.

\begin{thm} \label{large-directed-girth}
Let $D$ be a directed graph, let $f : V(D) \rightarrow \mathbb{Z}_{\ge 2}$ be a function, and let $d = \max \{\Delta^-(D), \Delta_{f-1}^+(D) \}$. If $D$ has directed girth $g \ge 4d$, then $\vec{a}_f(D) \le d + 1$.
\end{thm}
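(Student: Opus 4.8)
The plan is to adapt Alon's probabilistic proof of the directed linear arboricity bound for digraphs of large directed girth, with the main change that his decomposition of a $d$-regular digraph into $d$ one-factors is replaced by an $f$-coloring of an auxiliary bipartite graph via Theorem~\ref{Konig-f}.

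First I would produce a good initial arc-coloring. Let $H$ be the bipartite ``split'' graph on $\{v^+ : v\in V(D)\}\cup\{v^- : v\in V(D)\}$ with one edge $u^+v^-$ for each arc $(u,v)$ of $D$; since $D$ has no parallel arcs, $H$ is simple. Setting $f_H(v^-)=1$ and $f_H(v^+)=f(v)-1$ gives $\Delta_{f_H}(H)=\max\{\Delta^-(D),\Delta_{f-1}^+(D)\}=d$, so by Theorem~\ref{Konig-f} the arcs of $D$ can be partitioned into $d$ classes $B_1,\dots,B_d$ in which every vertex $v$ has indegree at most $1$ and outdegree at most $f(v)-1$. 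Since indegrees in each $B_i$ are at most $1$, every weakly connected component of $B_i$ contains at most one cycle, and any such cycle is a directed cycle; thus deleting one arc from each directed cycle of $B_i$ turns $B_i$ into a degree-$f$ branching. Within a single $B_i$ these directed cycles lie in distinct components, hence are vertex-disjoint, and each has length at least the directed girth $g\ge 4d$.

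Next I would choose the deleted arcs all at once so that they can serve as a single extra color. Let $A'$ be the set of arcs lying on some directed cycle of some $B_i$; because the $B_i$ are arc-disjoint and the cycles inside each $B_i$ are arc-disjoint, the arc sets of these directed cycles form a partition of $A'$. Build the conflict graph $\Gamma$ on vertex set $A'$ in which two arcs are adjacent whenever they share an endpoint in $\overline{D}$; an independent set of $\Gamma$ is then a matching of $\overline{D}$, and hence a degree-$f$ branching. I would check that $\Gamma$ is an induced subgraph of the line graph of a simple graph derived from $D$ (this is where the passage to an induced subgraph of the full line graph is used, and the edges of multiplicity two coming from anti-parallel arcs are dealt with by a routine modification), and then bound $\Delta(\Gamma)$: for $e=(u,v)\in A'$, in each $B_i$ the vertex $u$ lies on at most one directed cycle, so $u$ is incident to at most one arc of $A'$ with tail $u$ and at most one with head $u$; hence $u$ is incident to at most $2d$ arcs of $A'$, and likewise for $v$, and removing $e$ itself from each side gives $\Delta(\Gamma)\le 4d-2$. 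Since each directed cycle $C$ satisfies $|A(C)|\ge g\ge 4d=(4d-2)+2$, Theorem~\ref{independent-transversals} yields an independent transversal of $\Gamma$ with respect to the partition $\{A(C)\}$, i.e. a choice $e_C\in A(C)$ for every cycle $C$ with $\{e_C\}$ a matching of $\overline{D}$. Giving $\{e_C\}$ a new color $d+1$ and deleting these arcs from $B_1,\dots,B_d$ then produces $d+1$ degree-$f$ branchings covering $A(D)$, so $\vec{a}_f(D)\le d+1$.

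The main obstacle is the setup in the third paragraph: realizing $\Gamma$ as an induced subgraph of the line graph of a genuine simple graph while still retaining enough conflicts to force the new color class to be acyclic (not merely of indegree and outdegree at most one), which is exactly where anti-parallel arcs of $D$ require care. A secondary but essential point is that the degree count gives only $4d-2$ because a vertex lies on at most one directed cycle of each $B_i$; this is what allows the girth hypothesis to be as weak as $g\ge 4d$, and it is also the step where the argument genuinely exploits the structure of the Hakimi and Kariv coloring rather than an arbitrary coloring into degree-$f$ branchings.
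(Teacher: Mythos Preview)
Your proposal is correct and follows essentially the same route as the paper: the bipartite split graph, the application of Theorem~\ref{Konig-f} to get $d$ directed degree-$f$ pseudoforests, the passage to the line graph of the union $D'$ of monochromatic directed cycles, the degree bound $\Delta \le 4d-2$, and the appeal to Theorem~\ref{independent-transversals} are exactly what the paper does.

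The one place where you are making life harder than necessary is what you flag as ``the main obstacle,'' namely anti-parallel arcs. There is nothing to do here: as soon as $D$ has an arc we have $d\ge 1$, so the directed girth hypothesis gives $g\ge 4d\ge 4>2$, and hence $D$ contains no directed $2$-cycle and therefore no pair of anti-parallel arcs. Thus $\overline{D'}$ is automatically a simple graph, its line graph is the line graph of a simple graph on the nose, and no modification is needed. The paper handles this in a parenthetical remark. Your ``secondary point'' about the degree bound coming from each vertex lying on at most one cycle per $B_i$ is correct and is precisely the (unstated) reason behind the paper's inequality $\Delta(H)\le 4d-2$.
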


\begin{proof}
Construct an auxiliary bipartite graph $G$ with parts $X$ and $Y$ as follows: for every vertex $v$ of $D$ put a copy $v_X$ in $X$ and $v_Y$ in $Y$, and for every arc $e$ of $D$ with tail $u$ and head $v$ put an edge in $G$ between $u_X$ and $v_Y$. Then $v_X$ has degree $d_D^+(v)$ and $v_Y$ has degree $d_D^-(v)$ in $G$, for every $v \in V(D)$. Define the function $g : V(G) \rightarrow \mathbb{Z}_{\ge 1}$ by $g(v_X) = f(v) - 1$ and $g(v_Y) = 1$ for all $v \in V(D)$. By Theorem \ref{Konig-f}, 
\begin{align*}
	\chi_g'(G) = \Delta_g(G) = \max \left\{ \max_{v \in V(D)} \left\lceil \frac{d_D^+(v)}{f(v) - 1} \right\rceil, \max_{v \in V(D)} d_D^-(v) \right\} = \max \left\{ \Delta_{f-1}^+(D),  \Delta^-(D) \right\} = d.
\end{align*}
Thus $G$ can be edge-colored into $d$ degree-$g$ subgraphs. These subgraphs of $G$ correspond to directed subgraphs $B_1, \ldots, B_d$ of $D$ with $d_{B_i}^-(v) \le 1$ and $d_{B_i}^+(v) \le f(v) - 1$ for all $1 \le i \le d$ and $v \in V(D)$. The $B_i$'s are thus directed degree-$f$ pseudoforests, meaning that every vertex $v$ of $B_i$ has indegree at most one and the underlying undirected graph $\overline{B_i}$ is a degree-$f$ pseudoforest. 

Observe that if we remove one arc from every monochromatic directed cycle in $D$, the remaining color classes will be degree-$f$ branchings. Let $D'$ be the spanning directed subgraph of $D$ that is the union all monochromatic directed cycles in $D$, let $H$ be the line graph of $\overline{D'}$, and let $(V_i)_{i \in [k]}$ be the edge sets of the monochromatic directed cycles in $D'$. Then $(V_i)_{i \in [k]}$ is a partition of $V(H)$, and by the directed girth condition we have $|V_i| \ge 4d$ for all $i \in [k]$. Since $\Delta(H) \le 4d - 2$, $|V_i| \ge 4d = (4d - 2) + 2$ for every $i$, and $H$ is the line graph of a simple graph (as $D$ cannot have anti-parallel arcs by the directed girth assumption), Theorem \ref{independent-transversals} implies that there is an independent transversal of $H$ with respect to $(V_i)_{i \in [k]}$. But this means that there is a (directed) matching $M$ of $D'$ containing an arc from every monochromatic directed cycle in $D$. Then $M, B_1 \setminus M, \ldots, B_d \setminus M$ are all degree-$f$ branchings, giving us an arc-coloring of $D$ into $d + 1$ degree-$f$ branchings.
\end{proof}

By Corollary \ref{orientation-corollary}, Theorem \ref{large-directed-girth} implies the desired large-girth result on Conjecture \ref{degree-f-arboricity-1}.

\largegirth*

Alon originally proved these large-girth results for the case $f = 2$ under the girth condition $g \ge 100d$ \cite{Al1}, which he later improved to $g \ge 8ed$ \cite{Al0}. He derived these girth conditions from earlier versions of Theorem \ref{independent-transversals} on independent transversals. Using the Lov\'asz local lemma, Alon proved Theorem \ref{independent-transversals} for general graphs $G$ (not just line graphs) under the conditions $|V_i| \ge 25\Delta(G)$ \cite{Al1} and $|V_i| \ge 2e\Delta(G)$ \cite{Al0}, respectively. Haxell \cite{Hax} subsequently improved these class size conditions to $|V_i| \ge 2\Delta(G)$. Haxell's bound is best possible if one does not assume that $G$ is the line graph of a simple graph \cite{SzTa}.

\subsection{Asymptotics}

Now we prove that Conjecture \ref{degree-f-branchings} holds asymptotically for all directed graphs $D$ when the function $f$ is bounded. Similar to Alon's \cite{Al0} proof for the case $f = 2$, we show that every directed graph $D$ can be decomposed into a specified number of directed subgraphs each with large directed girth and approximately the same maximum degree, and then we apply Theorem \ref{large-directed-girth} to each of these directed subgraphs individually. We use the following slight modification of a lemma of Alon \cite{Al0}.

\begin{lem} \label{random-vertex-coloring}
Let $D$ be a directed graph and let $f : V(D) \rightarrow \mathbb{Z}_{\ge 2}$ be a function. Suppose that $d = \max \{ \Delta^-(D), \Delta_{f - 1}^+(D) \}$ is sufficiently large compared to the maximum value of $f$, and let $k \le d^{9/10}$ be a positive integer. Then there is a $k$-coloring of $V(D)$ using the colors $0, 1, \ldots, k - 1$ with the following property: for every vertex $v$ and color $i$, the numbers
\begin{align*}
	d^-(v, i) &= |\{ u \in V(D) : (u, v) \in E(D) \text{ and } u \text{ is colored } i \}|, \\
	d^+(v, i) &= |\{ u \in V(D) : (v, u) \in E(D) \text{ and } u \text{ is colored } i \}|
\end{align*}
satisfy $d^-(v, i), \frac{d^+(v, i)}{f(v) - 1} \le \frac{d}{k} + 3 \sqrt{\frac{d \log d}{k}}.$

\end{lem}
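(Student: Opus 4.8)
The plan is to color the vertices of $D$ independently and uniformly at random with one of the colors $0, 1, \dots, k-1$, and then use the Lov\'asz Local Lemma to show that with positive probability the resulting coloring satisfies all the stated degree inequalities simultaneously. This is exactly Alon's strategy \cite{Al0} for the case $f = 2$; the one genuinely new ingredient is the asymmetric weighting of outdegrees by $f(v) - 1$, which must be carried through every estimate.

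First I would compute expectations. For a fixed vertex $v$ and color $i$, the quantity $d^-(v,i)$ is a sum of $d^-(v) \le d$ independent Bernoulli$(1/k)$ indicators (one per in-neighbor of $v$), so $\mathbb{E}[d^-(v,i)] = d^-(v)/k \le d/k$. Similarly $d^+(v,i)$ is a sum of $d^+(v) \le (f(v)-1)\Delta_{f-1}^+(D) \le (f(v)-1)d$ such indicators, hence $\mathbb{E}\!\left[d^+(v,i)/(f(v)-1)\right] \le d/k$. In both cases the target threshold $d/k + 3\sqrt{d\log d/k}$ exceeds the mean by at least $3\sqrt{d\log d/k}$ — for the outdegree part, after clearing the common factor $f(v)-1$ from both sides. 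Let $A_{v,i}$ be the bad event that $d^-(v,i) > d/k + 3\sqrt{d\log d/k}$ or $d^+(v,i)/(f(v)-1) > d/k + 3\sqrt{d\log d/k}$. Applying a Chernoff bound for the upper tail of a binomial, and using that $3\sqrt{d\log d/k} \le d/k$ once $d$ is large (which holds since $k \le d^{9/10} \ll d/\log d$), I would get that each of the two parts of $A_{v,i}$ has probability at most $d^{-3}$, so $\Pr[A_{v,i}] \le 2d^{-3}$. The point to check in the outdegree case is that the factor $f(v)-1 \ge 1$ scales both the deviation and the ``variance'' term in the exponent of the Chernoff bound, so it does not weaken the estimate.

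Next comes the local lemma. The event $A_{v,i}$ is determined entirely by the colors of the vertices in $S_v := N^-(v) \cup N^+(v)$, and $|S_v| \le d^-(v) + d^+(v) \le d + (f(v)-1)d \le td$, where $t$ is the maximum value of $f$. Two events $A_{v,i}$ and $A_{u,j}$ are mutually independent unless $S_v \cap S_u \neq \emptyset$; since each $w \in S_v$ lies in $S_u$ for at most $|S_w| \le td$ vertices $u$, the dependency degree of the event graph is at most $k(td)^2 \le t^2 d^{29/10}$. The symmetric Lov\'asz Local Lemma then applies provided $e \cdot 2d^{-3} \cdot (t^2 d^{29/10} + 1) \le 1$; the left-hand side is $O(t^2 d^{-1/10}) = o(1)$, so this holds whenever $d$ is sufficiently large compared to $t$, which is the hypothesis. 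Therefore with positive probability none of the $A_{v,i}$ occurs, and any such coloring has the required property.

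I expect the only real work to be bookkeeping rather than a substantive obstacle: calibrating the Chernoff estimate so that the comparatively small slack $3\sqrt{d\log d/k}$ (and not a larger constant multiple) already forces failure probability $o(d^{-29/10})$, and checking carefully that normalizing outdegrees by $f(v)-1$ costs only the harmless factor $t$ in both the concentration exponent and the dependency degree. Modulo these checks, the argument is a direct transcription of Alon's to the non-regular, $f$-weighted setting.
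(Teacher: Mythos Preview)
Your proposal is correct and follows essentially the same approach as the paper: a uniform random coloring, Chernoff tail bounds giving failure probability polynomially small in $d$, and the symmetric Lov\'asz Local Lemma with dependency degree $O(k(td)^2)$. The only cosmetic difference is that the paper first augments $D$ with dummy vertices and arcs so that every $v\in V(D)$ has in-degree exactly $d$ and out-degree exactly $d(f(v)-1)$, which makes $d^-(v,i)$ and $d^+(v,i)$ binomials with fixed parameters (and yields the slightly sharper bound $d^{-4}$); your direct treatment of the actual, possibly smaller, degrees works equally well.
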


\begin{proof}
Start by augmenting $D$ to a directed graph $D'$, adding auxiliary vertices and arcs so that every vertex $v \in V(D)$ has indegree $d_{D'}^-(v) = d$ and outdegree $d_{D'}^+(v) = d(f(v) - 1)$ in $D'$ (not caring about the indegrees and outdegrees of the added vertices). Consider a random coloring of the vertices of $D'$ with the colors $0, 1, \ldots, k-1$, where the color of every vertex chosen uniformly at random. For every vertex $v \in V(D)$ and color $i$, let $A_{v,i}^-$ be the event that $d_{D'}^-(v,i) > d/k + 3\sqrt{(d \log d)/k}$. Observe that $d_{D'}^-(v,i)$ is a binomial random variable with mean $d/k$. By a version of Chernoff's inequality (see Appendix A in \cite{AlSp}), we have that $\text{Pr}[A_{v,i}^-] \le 1/d^4$. Likewise, letting $A_{v,i}^+$ be the event that $d_{D'}^+(v,i)/(f(v) - 1) > d/k + 3\sqrt{(d \log d)/k}$, we have that $\text{Pr}[A_{v,i}^+] \le 1/d^{4(f(v) - 1)} \le 1/d^4$.

Each of the events $A_{v,i}^-, A_{v,i}^+$ is mutually independent of the events $A_{u,j}^-, A_{u,j}^+$ for all the vertices $u \in V(D)$ that do not have a common neighbor with $v$ in $\overline{D}$. Thus each of $A_{v,i}^-, A_{v,i}^+$ is mutually independent of all but at most $k(td)^2$ of the events $A_{u,j}^-, A_{u,j}^+$, where $t$ is the maximum output of $f$. Since $e(1/d^4)(k(td)^2 + 1) < 1$ for $d$ sufficiently large compared to $t$, by the symmetric Lov\'asz local lemma (see Chapter 5 in \cite{AlSp}) no event $A_{v,i}^-$ or $A_{v,i}^+$ occurs. Thus there is a coloring of $V(D')$ satisfying $d_{D'}^-(v, i), d_{D'}^+(v, i)/(f(v) - 1) \le d/k + 3 \sqrt{(d \log d)/k}$ for all $v \in V(D)$ and $0 \le i \le k-1$. Deleting the auxiliary vertices and arcs from $D'$, this gives a desired coloring of $V(D)$.
\end{proof}

We can now prove the following asymptotic version of Conjecture \ref{degree-f-branchings} for directed graphs.

\begin{thm} \label{asymptotic-directed}
For every integer $t \ge 2$, there exists a constant $c_t > 0$ such that for every directed graph $D$ and function $f : V(D) \rightarrow \mathbb{Z}_{\ge 2}$ with maximum value at most $t$, we have
\begin{align*}
	\vec{a}_f(D) \le d + c_td^{3/4} (\log d)^{1/2},
\end{align*} 
where $d = \max \{\Delta^-(D), \Delta_{f-1}^+(D) \}$.
\end{thm}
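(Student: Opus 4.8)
The plan is to follow Alon's strategy for directed linear arboricity, decomposing $D$ into roughly $k$ directed subgraphs of large directed girth via the vertex coloring from Lemma \ref{random-vertex-coloring}, then applying Theorem \ref{large-directed-girth} to each piece. First I would fix $k$ to be a positive integer of order $d^{1/4}/(\log d)^{1/2}$ (say $k = \lceil c \cdot d^{1/4} (\log d)^{-1/2} \rceil$ for a suitable constant; note $k \le d^{9/10}$ holds for $d$ large), and apply Lemma \ref{random-vertex-coloring} to get a $k$-coloring of $V(D)$ with colors $0, 1, \ldots, k-1$ such that $d^-(v,i), d^+(v,i)/(f(v)-1) \le d/k + 3\sqrt{(d\log d)/k}$ for all $v$ and $i$. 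For $0 \le i, j \le k-1$ with $i \ne j$, let $D_{ij}$ be the directed subgraph of $D$ consisting of all arcs $(u,v)$ such that $u$ has color $i$, $v$ has color $j$, and $j \equiv i+1 \pmod k$ --- i.e., partition the arcs according to the ordered pair of colors of their endpoints and then group the color-pairs appropriately so that each group forms a subgraph with controlled girth. Actually the cleaner bookkeeping (as in Alon) is: for $0 \le \ell \le k-1$, let $D_\ell$ be the subgraph of all arcs whose tail has some color $i$ and whose head has color $i + \ell \pmod k$; then every directed cycle in $D_\ell$ has length a multiple of $k$ (for $\ell = 0$ the subgraph is a disjoint union over color classes, and arcs within a class of $D$ with no constraint — handle $\ell=0$ separately, see below), while for $\ell \ne 0$ every directed closed walk must traverse the color index by $\ell$ each step and so return to the start only after a multiple of $k/\gcd(k,\ell)$ steps, hence directed girth $\ge k/\gcd(k,\ell) \ge$ something we can lower bound. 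The subtlety is that $\ell = 0$ gives arcs joining two vertices of the same color, which need not have large girth; these are also handled by taking the subgraph induced on each color class and recursing, or — more simply — by observing that in Alon's argument one uses $k+1$ color-difference classes where one extra class absorbs the diagonal; I would mirror exactly whichever device Alon \cite{Al0} uses.

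Next I would bound the relevant maximum-degree parameter of each piece $D_\ell$. By construction, for a vertex $v$ in $D_\ell$, its indegree in $D_\ell$ is $d^-(v, j)$ for the unique color $j$ with $j + \ell \equiv \mathrm{color}(v)$, hence at most $d/k + 3\sqrt{(d\log d)/k}$; similarly $d^+_{D_\ell}(v) \le (f(v)-1)(d/k + 3\sqrt{(d\log d)/k})$, so $d^+_{D_\ell}(v)/(f(v)-1) \le d/k + 3\sqrt{(d\log d)/k}$. Therefore $d_\ell := \max\{\Delta^-(D_\ell), \Delta^+_{f-1}(D_\ell)\} \le d/k + 3\sqrt{(d\log d)/k} =: d'$. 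To apply Theorem \ref{large-directed-girth} to $D_\ell$ I need its directed girth to be at least $4 d_\ell$; since the directed girth of $D_\ell$ is at least roughly $k$ (or $k/\gcd(k,\ell)$ — to be safe one restricts $\ell$ to a set where this gcd is controlled, e.g. only $\ell$ coprime to $k$, or picks $k$ prime) and $4 d_\ell \le 4 d'$, it suffices that $k \ge 4 d' = 4d/k + 12\sqrt{(d\log d)/k}$, i.e. $k^2 \gtrsim 4d$ up to lower-order terms, which is satisfied by $k \asymp \sqrt{d}$. Hmm — so in fact I want $k$ of order $\sqrt{d}$, not $d^{1/4}$: then $d' \asymp \sqrt{d}\log^{1/2}$... let me recompute: with $k \asymp \sqrt{d}$ we get $d' \asymp d/k + \sqrt{(d\log d)/k} \asymp \sqrt{d} + d^{1/4}\sqrt{\log d} \asymp \sqrt{d}$, and Theorem \ref{large-directed-girth} applied to each $D_\ell$ gives $\vec{a}_f(D_\ell) \le d' + 1$, so $\vec{a}_f(D) \le \sum_\ell \vec{a}_f(D_\ell) \le (k+1)(d'+1) \asymp \sqrt{d}\cdot\sqrt{d} = d$ — this gives the right main term $d$, and the error term is $O(k \cdot (d/k \text{-correction}))$; carefully, $(k+1)(d/k + 3\sqrt{(d\log d)/k} + 1) = d + O(k) + O(k\sqrt{(d\log d)/k}) + O(d/k) = d + O(\sqrt{d}) + O(\sqrt{kd\log d})$, and with $k \asymp \sqrt{d}$ this is $d + O(d^{1/4}\sqrt{d\log d}) = d + O(d^{3/4}\sqrt{\log d})$, matching the claimed bound. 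So I would take $k = \lceil \sqrt{d} \rceil$ (or the nearest prime above $\sqrt d$, to make the directed girth of every nonzero-difference piece exactly a multiple of $k$ and hence $\ge k$).

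To assemble the proof: (i) verify $k \le d^{9/10}$ for $d$ large so Lemma \ref{random-vertex-coloring} applies; (ii) obtain the $k$-coloring and define the $D_\ell$'s (plus the diagonal-handling device); (iii) check each $D_\ell$ has directed girth $\ge k \ge 4 d'$ for $d$ large, where $d' = d/k + 3\sqrt{(d\log d)/k}$ and $d_\ell \le d'$; (iv) apply Theorem \ref{large-directed-girth} to each $D_\ell$ to get $\vec{a}_f(D_\ell) \le d' + 1$; (v) sum: $\vec{a}_f(D) \le (k+1)(d'+1) \le d + c_t d^{3/4}(\log d)^{1/2}$ after arithmetic, absorbing the dependence on $t$ into the threshold for "$d$ sufficiently large" (for small $d$ the bound is trivial by choosing $c_t$ large, using e.g. $\vec{a}_f(D) \le |E(D)|$ or a crude bound). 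I expect the main obstacle — though it is more bookkeeping than a genuine difficulty — to be controlling the directed girth of the pieces $D_\ell$ uniformly: one must ensure that \emph{every} directed cycle in $D_\ell$ has length at least $k$, which requires handling the diagonal difference $\ell=0$ and possibly the small-gcd differences correctly (Alon's fix, which I would reproduce, is essentially to use $k$ classes indexed by residues and route the diagonal arcs into them by a further splitting, or to take $k$ prime so that $\gcd(k,\ell)=1$ for all $1\le\ell\le k-1$). A secondary point to be careful about is that Theorem \ref{large-directed-girth} requires $D_\ell$ to be a directed \emph{graph} (no parallel arcs), which is inherited from $D$, and that anti-parallel arcs — forbidden only implicitly by the girth hypothesis in that theorem — do not arise in $D_\ell$ once the directed girth is $\ge k > 2$; this is automatic. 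Everything else is Chernoff/local-lemma input already packaged in Lemma \ref{random-vertex-coloring} and the deterministic input already packaged in Theorem \ref{large-directed-girth}.
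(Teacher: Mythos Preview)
Your proposal is correct and follows essentially the same approach as the paper: choose a prime $k \asymp \sqrt{d}$ (the paper takes $5\sqrt{d} \le k \le 10\sqrt{d}$), apply Lemma~\ref{random-vertex-coloring}, decompose $D$ into the subgraphs $D_\ell$ by color difference $\ell$ modulo $k$, apply Theorem~\ref{large-directed-girth} to each $D_\ell$ with $\ell \neq 0$, and sum. The paper's specific device for the diagonal piece $D_0$---which you correctly flagged but left unspecified---is simply the trivial bound $\vec{a}_f(D_0) \le 2d_0$, obtained by arc-coloring $D_0$ into $d_0$ directed degree-$f$ pseudoforests via Theorem~\ref{Konig-f} and then splitting each pseudoforest into two degree-$f$ branchings; this contributes the extra $d'+1$ that makes your $(k+1)(d'+1)$ bookkeeping work out.
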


\begin{proof}
We may assume that $d$ is sufficiently large compared to $t$ wherever necessary. Pick a prime $k$ satisfying $5d^{1/2} \le k \le 10d^{1/2}$. By Lemma \ref{random-vertex-coloring}, there exists a $k$-coloring $\phi$ of $V(D)$ satisfying the stated inequalities. For each $0 \le i \le k - 1$, let $D_i$ be the spanning directed subgraph of $D$ with arc set $E(D_i) = \{ (u,v) \in E(D) : \phi(v) \equiv \phi(u) + i \pmod{k} \}$. The inequalities in Lemma \ref{random-vertex-coloring} imply that $d_i = \max \{ \Delta^-(D_i), \Delta_{f-1}^+(D_i) \} \le d/k + 3 \sqrt{(d \log d)/k}$ for every $0 \le i \le k - 1$. Moreover, for $i \neq 0$ the length of every directed cycle in $D_i$ is divisible by $k$, and thus $D_i$ has directed girth $g_i \ge k \ge 4d_i$ (using that $k \ge 5d^{1/2}$ and $d$ is sufficiently large). By Theorem \ref{large-directed-girth}, we deduce that $\vec{a}_f(D_i) \le d_i + 1 \le d/k + 3 \sqrt{(d \log d)/k} + 1$ for every $1 \le i \le k - 1$. For $D_0$ we only use the trivial inequality $\vec{a}_f(D_0) \le 2d_0 \le 2d/k + 6 \sqrt{(d \log d)/k}$, obtained by first using Theorem \ref{Konig-f} to arc-color $D_0$ into $d_0$ directed degree-$f$ pseudoforests, and then trivially arc-coloring each of these directed degree-$f$ pseudoforests into $2$ degree-$f$ branchings. These inequalities together with $5d^{1/2} \le k \le 10d^{1/2}$ give us that
\begin{align*}
	\vec{a}_f(D) \le (k - 1) \left( \frac{d}{k} + 3 \sqrt{\frac{d \log d}{k}} + 1 \right) + \left( \frac{2d}{k} + 6 \sqrt{\frac{d \log d}{k}} \right) \le d + c_td^{3/4}( \log d )^{1/2},
\end{align*}
for some constant $c_t$ depending on $t$ (since we assumed $d$ is large compared to $t$).
\end{proof}

By Corollary \ref{orientation-corollary}, Theorem \ref{asymptotic-directed} implies the desired asymptotic version of Conjecture \ref{degree-f-arboricity-1} for simple graphs.

\asymptoticundirected*

We conclude with a few remarks. Alon \cite{Al0} stated that the lower order term in Theorem \ref{asymptotic-directed} (and thus also in Theorem \ref{asymptotic-undirected}) can be improved to $c'd^{2/3}(\log d)^{1/3}$ when $f = 2$, by using ``recursion" instead of a naive arc-coloring of $D_0$, as well as changing some parameters. This kind of modification could perhaps also work for more general vertex weight functions $f$. Alon, Teague, and Wormald \cite{AlTeWo} later recovered the same lower order term stated by Alon, in the undirected case, using a different and in some ways simpler method. Instead of using a large girth result like Theorem \ref{large-directed-girth} or Theorem \ref{large-girth} above, they use a variant of the classical and easier result that the complete graph $K_{2n}$ can be decomposed into $n$ Hamiltonian paths. However, their proof method does not appear to generalize as readily as Alon's for general functions $f$.

Alon's \cite{Al1} original asymptotic proof of the Linear Arboricity Conjecture involved coloring the edges of the graph rather than the vertices as we did above. This original approach resulted in a worse error term, but Alon noted without proof that this approach also gives the desired asymptotics more generally for multigraphs $G$ with bounded edge-multiplicity $\mu(G)$. One can check that this approach also generalizes well to other vertex weight functions $f$ on the vertices, when one works in the directed setting. That is, the asymptotic upper bound $a_f(G) \le (1 + o(1))\max \{\Delta_f(G), a(G)\}$ that we proved above holds more generally for multigraphs $G$ with bounded edge-multiplicity. In particular, the optimal constant in Question \ref{question} is $c_f = 1$ more generally for such multigraphs.

\section*{Acknowledgments}
I would like to thank Nathan Benedetto Proen\c{c}a for aiding my understanding and exposition of the linear programming in Section \ref{proof-of-main-theorem}. I would also like to thank Penny Haxell for the helpful edits.


\begin{thebibliography}{}
	\footnotesize
	\bibitem{AhAlBe} R. Aharoni, N. Alon, and E. Berger, Eigenvalues of $K_{1,k}$-free graphs and the connectivity of their independence complexes, \textit{J. Graph Theory} 83(4) (2015), 384--391.
	
	\bibitem{Ai} H. A\"it-djafer, Linear arboricity for graphs with multiple edges, \textit{J. Graph Theory} 11(2) (1987), 135--140.
	
	\bibitem{AkExHa} J. Akiyama, G. Exoo, and F. Harary, Covering and packing in graphs, III. cyclic and acyclic invariants, \textit{Math. Slovaca} 30(4) (1980), 405--417.
	
	\bibitem{AkExHa2} J. Akiyama, G. Exoo, and F. Harary, Covering and packing in graphs, IV. linear arboricity, \textit{Networks} 11(1) (1981), 69--72.
	
	\bibitem{Al1} N. Alon, The linear arboricity of graphs, \textit{Israel J. Math.} 62(3) (1988), 311--325.
	
	\bibitem{Al0} N. Alon, Probabilistic methods in coloring and decomposition problems, \textit{Discrete Math.} 127 (1994), 31--46.
	
	\bibitem{AlSp} N. Alon and J. H. Spencer, \textit{The Probabilistic Method}, Fourth edition, Wiley and Sons, New York (2015).
	
	\bibitem{AlTeWo} N. Alon, V. J. Teague, and N. C. Wormald, Linear arboricity and linear $k$-arboricity of regular graphs, \textit{Graphs Combin.} 17(1) (2001), 11--16.
	
	\bibitem{BaBiFrPa} M. Basavaraju, A. Bishnu, M. Francis, and D. Pattanayak, The linear arboricity conjecture for graphs of low degeneracy, arXiv:2007.06066 (2021), preprint.
	
	\bibitem{CaRo} Y. Caro and Y. Roditty, Acyclic edge-colorings of sparse graphs, \textit{Appl. Math. Lett.} 7(1) (1994), 63-67.
	
	\bibitem{ChHaYu} G. Chen, Y. Hao, and G. Yu, Linear arboricity of degenerate graphs, arXiv:2207.07169 (2022), preprint.
	
	\bibitem{ChJiZa} G. Chen, G. Jing, and W. Zang, Proof of the Goldberg-Seymour conjecture on edge-colorings of multigraphs, arXiv:1901.10316 (2019), preprint.
	
	\bibitem{Ed1} J. Edmonds, Edge-Disjoint Branchings, In: Rustin, R., editor, \textit{Combin. Algor.}, 91--96, Academic Press (1973).
	
	\bibitem{EnPe} H. Enomoto and B. P\'eroche, The linear arboricity of some regular graphs, \textit{J. Graph Theory} 8(2) (1984), 309--324.
	
	\bibitem{EnTo} R. C. Entringer and L. K. Tolman, Characterization of graphs having orientations satisfying local degree constraints, \textit{Czechosl. Math. J.} 28(1) (1978), 108--119.
	
	 \bibitem{FeRaSi} U. Feige, R. Ravi, and M. Singh, Short tours through large linear forests, In: \textit{International Conference on Integer Programming and Combinatorial Optimization}, Springer, 2014, 273--284.
	
	\bibitem{FeFoJa} A. Ferber, J. Fox, and V. Jain, Towards the linear arboricity conjecture, \textit{J. Combin. Theory Ser. B} 142 (2020), 56--70.
	
	\bibitem{Fr} A. Frank, Covering branchings, \textit{Acta Scient. Math. [Szeged]} 41 (1979), 77--81.
	
	\bibitem{Go1} M. K. Goldberg, On multigraphs of almost maximal chromatic class, \textit{Diskret. Anal.} 23 (1973), 3--7.
	
	\bibitem{Gu3} F. Guldan, The linear arboricity of 10-regular graphs, \textit{Math. Slovaca} 36(3) (1986), 225--228.
	
	\bibitem{Gu4} R. P. Gupta, Studies in the Theory of Graphs, Thesis (PhD), Tata Institute of Fundamental Research, Bombay (1967).
	
	\bibitem{HaKa} S. L. Hakimi and O. Kariv, A generalization of edge-coloring in graphs, \textit{J. Graph Theory} 10 (1986), 139--154.
	
	\bibitem{Ha1} S. L. Hakimi, On the degrees of the vertices of a directed graph, \textit{J. Franklin Institute} 279(4) (1965), 290--308.
	
	\bibitem{Ha2} F. Harary, Covering and packing in graphs, I., \textit{Ann. New York Acad. Sci.} 175 (1970), 198--205.
	
	\bibitem{Hax} P. Haxell, A note on vertex list colouring, \textit{Combin. Probab. Comput.} 10 (2001), 345--348.
	
	\bibitem{HeLiBaSu} W. H. He, H. Li, Y. D. Bai, and Q. Sun, Linear arboricity of regular digraphs, \textit{Acta Math. Sinica, English Ser.} 33(4) (2017), 501--508.
	
	\bibitem{Ka} Z. A. Kareyan, The arboricity of directed graphs, in: \textit{Mathematical Problems in Cybernetics and Computer Science}, Publishing House of the Armenian Academy of Sciences, Yerevan (1979), 59--63.
	
	\bibitem{LaPo} R. Lang and L. Postle, An improved bound for the linear arboricity conjecture, arXiv:2008.04251 (2020), preprint.
	
	\bibitem{Lo} L. Lov\'asz, On two minimax theorems in graph theory, \textit{J. Combin. Theory Ser. B} 21(2) (1976), 96-103.
	
	\bibitem{NaNiSa} S. Nakano, T. Nishizeki, and N. Saito, On the f-coloring of multigraphs, \textit{IEEE Trans. on Circ. and Syst.} 35(3) (1988), 345--353.
	
	\bibitem{NaPe} A. Nakayama and B. P\'eroche, Linear arboricity of digraphs, \textit{Networks} 17 (1987), 39--53.
	
	\bibitem{Na} C. St. J. A. Nash-Williams, Decomposition of finite graphs into forests, \textit{J. London Math. Soc.} 39 (1964), 12.
	
	\bibitem{Pe} B. P\'eroche, Complexit\'e de l'arboricit\'e lin\'eaire d'un graphe, RAIRO Oper. Res. 16(2) (1982), 125--129.
	
	\bibitem{Se2} P. Seymour, On multicolourings of cubic graphs, and conjectures of Fulkerson and Tutte, \textit{Proc. Lond. Math. Soc.} 38(3) (1979), 423--460.
	
	\bibitem{St} M. Steibitz, D. Scheide, B. Toft, and L. M. Favrholdt, \textit{Graph edge coloring: Vizing's Theorem and Goldberg's Conjecture}, Wiley and Sons, New York (2012).
	
	\bibitem{SzTa} T. Szab\'o and G. Tardos, Extremal problems for transversals in graphs with bounded degree, \textit{Combinatorica} 26 (2006), 333--351.
	
	\bibitem{TaWu} X. Tan and J.-L. Wu, The linear arboricity of graphs with low treewidth, \textit{Discuss. Math. Graph Theory}, to appear. 
	
	\bibitem{Tr} M. Truszczy\'nski, Decompositions of graphs into forests with bounded maximum degree, \textit{Discrete Math.} 98 (1991), 207--222.
	
	\bibitem{Vi} V. G. Vizing, On an estimate of the chromatic class of a p-graph, \textit{Diskret. Analiz} 3 (1964), 25--30.
	
	\bibitem{Wd} R. Wdowinski, Orientation-based edge-colorings and linear arboricity of multigraphs, \textit{J. Graph Theory}, to appear.
	
	\bibitem{Wd1} R. Wdowinski, PhD thesis, in preparation.
	
	\bibitem{Wu1} J. Wu, On the linear arboricity of planar graphs, \textit{J. Graph Theory} 31(2) (1999), 129--134.
	
	\bibitem{Wu2} J. Wu, The linear arboricity of series-parallel graphs, \textit{Graphs Combin.} 16(3) (2000), 367--372.
	
	\bibitem{WuWu} J. Wu and Y. Wu, The linear arboricity of planar graphs of maximum degree seven is four, \textit{J. Graph Theory} 58(3) (2008), 210--220.
\end{thebibliography}
\end{document}